\newcommand\BibTeX{{\rmfamily B\kern-.05em \textsc{i\kern-.025em b}\kern-.08em
T\kern-.1667em\lower.7ex\hbox{E}\kern-.125emX}}
\colorlet{texcscolor}{blue!50!black}
\colorlet{texemcolor}{red!70!black}
\colorlet{texpreamble}{red!70!black}
\colorlet{codebackground}{black!25!white!25}
\newcommand{\bq}{\mathbf{q}}
\newcommand{\bd}{\mathbf{d}}
\newcommand{\bc}{\mathbf{c}}
\newcommand{\bg}{\mathbf{g}}
\newcommand{\btau}{\boldsymbol{\tau}}
\newcommand{\bmu}{\boldsymbol{\mu}}
\newcommand{\balpha}{\boldsymbol{\alpha}}
\newcommand{\bgamma}{\boldsymbol{\gamma}}
\newcommand{\bpsi}{\boldsymbol{\psi}}
\newcommand{\bD}{\boldsymbol{D}}
\newcommand{\bC}{\boldsymbol{C}}
\newcommand{\belll}{\boldsymbol{\ell}}
\newcommand{\Real}{\mathbb{R}}
\newtheorem{thrm}{Theorem}[section]
\newtheorem{lmm}[thrm]{Lemma}
\newtheorem{prpstn}[thrm]{Proposition}
\newtheorem{dfntn}[thrm]{Definition}
\newtheorem{rmrk}[thrm]{Remark}
\DeclareMathOperator{\diag}{diag}
\DeclareMathOperator{\glb}{\bigwedge}
\definecolor{mycolor1}{rgb}{0.00000,0.44700,0.74100}%
\definecolor{mycolor2}{rgb}{0.85000,0.32500,0.09800}%
\definecolor{mycolor3}{rgb}{0.92900,0.69400,0.12500}%
\definecolor{mycolor4}{rgb}{0.49400,0.18400,0.55600}%
\definecolor{mycolor5}{rgb}{0.46600,0.67400,0.18800}%
\begin{document}

\title{Graph-based algorithms for the efficient solution of a class of optimization problems}
\providecommand{\keywords}[1]{\textbf{\textit{Index terms---}} #1}
\author{Luca Consolini$^1$, Mattia Laurini$^1$, Marco Locatelli$^1$}

\date{\small $^1$ Dipartimento di Ingegneria e Architettura, Universit\`a degli Studi di Parma,\\ Parco Area delle Scienze 181/A, 43124 Parma, Italy.\\ luca.consolini@unipr.it, mattia.laurini@unipr.it, marco.locatelli@unipr.it}

\maketitle

\begin{abstract}
In this paper, we address a class of specially structured problems
that include speed planning, for mobile robots and robotic
manipulators, and dynamic programming. 
We develop two new numerical procedures,
that apply to the general case and to the
linear subcase. With numerical experiments, we show that the proposed
 algorithms outperform generic commercial solvers.
\end{abstract}
\keywords{Computational methods, Acceleration of convergence, Dynamic programming, Complete lattices}

\maketitle

\section{Introduction}
In this paper, we address a class of specially structured problems
of form
\begin{equation}
\label{eqn_prob_class}
\begin{aligned}
& \max_x f(x)\\
\textrm{subject to }\ & a \leq x \leq g(x) ,
\end{aligned}
\end{equation}
where $x \in \Real^n$, $a \in \Real^n$,
$f: \Real^n \rightarrow \Real$ is a continuous function, strictly monotone
increasing with respect to each component and
$g = (g_1, g_2, \ldots, g_n)^T: \Real^n \rightarrow \Real^n$,
is a continuous function such that, for $i = 1, \ldots, n$, $g_i$ is monotone not
decreasing with respect to all variables and constant with respect to
$x_i$. Also, we assume that there exists a real constant vector $U$ such that 
\begin{equation}
\label{eqn_for_f}
g(x) \leq U, \forall x: a \leq x \leq g(x)\,.
\end{equation}

A Problem related to~\eqref{eqn_prob_class} that is relevant
in applications is the following one
\begin{equation}
\label{eqn_prob_class_lin}
\begin{aligned}
& \max_x f(x)\\
\textrm{subject to }\ & 
0 \leq x \leq \underset{\ell \in \mathcal{L}}{\glb} \left\{ A_\ell x + b_\ell
  \right\},\
x \leq U,
\end{aligned}
\end{equation}
where, for each  $\ell \in \mathcal{L} = \{1, \ldots, L\}$, with $L \in \mathbb{N}$,
$A_\ell$ is a nonnegative matrix and $b_\ell$ is a nonnegative vector.

Note that the expression $\underset{\ell \in \mathcal{L}}{\glb}$,\ on the right hand
side of~\eqref{eqn_prob_class_lin}, denotes the greatest lower bound of $L$ vectors.
It corresponds to the component-wise minimum of vectors $A_\ell x + b_\ell$,
where a different value of $\ell \in \mathcal{L}$ can be chosen for
each component. We will show that Problem~\eqref{eqn_prob_class_lin}
is actually a subclass of~\eqref{eqn_prob_class} after a suitable
definition of function $g$ in~\eqref{eqn_prob_class}.

We will also show that the solution of Problems~\eqref{eqn_prob_class}
and~\eqref{eqn_prob_class_lin} is independent on the specific choice
of $f$. Hence, Problem~\eqref{eqn_prob_class_lin} is equivalent to the
following linear one
\begin{equation}
\label{eqn_prob_class_proglin}
\begin{aligned}
& \max_x \sum_{i=1}^n x_i\\
\textrm{subject to }\ &
0 \leq x , C x+ d \leq 0, x \leq U,
\end{aligned}
\end{equation}
where $C$ is a matrix such that every row contains one and only one
positive entry and $d$ is a nonpositive vector.

The structure of the paper is the following:
in Section~\ref{sec:appl} we justify the interest in Problem
class~\eqref{eqn_prob_class} and, in particular, its
subclass~\eqref{eqn_prob_class_lin}, by presenting some
problems in control, which can be reformulated as optimization
problems within subclass~\eqref{eqn_prob_class_lin}.
In Section~\ref{sec:prob_class} we derive some theoretical
results about Problem~\eqref{eqn_prob_class} and a class
of algorithms for its solution.
In Section~\ref{sec_lin_case} we do the same for the
subclass~\eqref{eqn_prob_class_lin}.
In Section~\ref{sec:convergence_speed_discussion} we discuss
some theoretical and practical issues about convergence speed
of the algorithms and we present some numerical experiments.
Some proofs are given in the appendix.

\subsection{Problems reducible to form~\eqref{eqn_prob_class_lin}}\label{sec:appl}
\subsubsection{Speed planning for autonomous vehicles}

\begin{figure}
\centering
\includegraphics[width = 2.5in]{./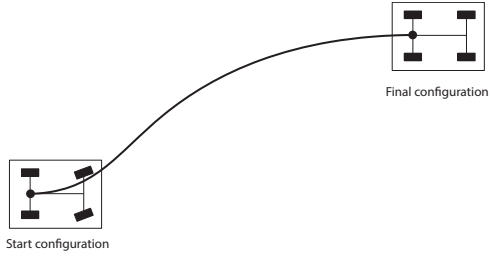}
\caption{A path to follow for an autonomous car-like vehicle.}
\label{fig:path-to-follow}
\end{figure}

This example is taken from~\cite{MinSCL17} and we refer the reader to
this reference for further detail.
We consider a speed planning problem for a mobile vehicle
(see Figure~\ref{fig:path-to-follow}).
We assume that the path that joins the initial and the final configuration
is assigned and we aim at finding the time-optimal speed law that satisfies
some kinematic and dynamic constraints.
Namely, we consider the following problem
\begin{subequations}
\label{eqn_problem_pr}
\begin{align}
& \min_{v \in C^1([0,s_f],\Real)} \int_0^{s_f} v^{-1}(s) d s \label{obj_fun_pr}\\
\textrm{subject to }\ &v(0)=0,\,v(s_f)=0 \label{inter_con_pr}\\
& 0< v(s) \leq  \bar v,& s \in (0,s_f), \label{con_speed_pr}\\
& |2 v'(s)v(s)| \leq A_T, &s \in [0,s_f],  \label{con_at_pr}\\
& |k(s)| v(s)^2 \leq A_N, &s \in [0,s_f],  \label{con_an_pr}
\end{align}
\end{subequations}
where $\bar v$, $A_T$, $A_N$ are upper bounds for the velocity, the
tangential acceleration and the normal acceleration, respectively.
Here, $s_f$ is the length of the path (that is assumed to be parameterized
according to its arc length) and $k$ is its scalar curvature (i.e., a function
whose absolute value is the inverse of the radius of the circle that locally
approximates the trajectory).

The objective function~\eqref{obj_fun_pr} is the total maneuver
time, constraints~\eqref{inter_con_pr} are the initial and final
interpolation
conditions and constraints~\eqref{con_speed_pr},~\eqref{con_at_pr},~\eqref{con_an_pr}
limit velocity and tangential and normal components of
acceleration.

After the change of variable $w=v^2$, the problem can be rewritten as

\begin{subequations}
\label{eqn_problem_cont}
\begin{align}
& \min_{w \in C^1([0,s_f],\Real)} \int_0^{s_f} w(s)^{-1/2} ds \label{obj_fun_cont}\\
\textrm{subject to }\ &w(0)=0,\,w(s_f)=0, \label{inter_con_cont}\\
& 0< w(s) \leq  \bar v^2, &s \in (0,s_f), \label{con_speed_cont}\\
& |w'(s)| \leq A_T, & s \in [0,s_f],  \label{con_at_cont}\\
& |k(s)| w(s) \leq A_N, & s \in [0,s_f].  \label{con_an_cont}
\end{align}
\end{subequations}

For $i=1\,\ldots,n$, set $w_i=w((i-1)h)$, with
$h=\frac{s_f}{n-1}$, then Problem~\eqref{eqn_problem_cont} can be
approximated with

\begin{subequations}
\label{eqn_problem_dis}
\begin{align}
& \min_{w\in \Real^n} \ \ \phi(w) \label{obj_fun_dis}\\
\textrm{subject to }\ & w_1=0,\,w_n=0, \label{inter_con_dis}\\
& 0< w_i \leq  \bar v^2, &i=2,\ldots,n-1, \label{con_speed_dis}\\
& |w_{i+1}-w_i| \leq h A_T, & i =1,\ldots,n-1,  \label{con_at_dis}\\
& |k(h(i -1))| w_i \leq A_N, & i = 2,\ldots,n-1,  \label{con_an_dis}
\end{align}
\end{subequations}
where the total time to travel the complete path is approximated by
\begin{equation}
\label{eqn_obj_discr}
\phi(w) = \sum_{i=1}^{n -1} t_i = 2h \sum_{i=1}^{n-1} \frac{1}{\sqrt{w_i}+ \sqrt{w_{i+1}}}.
\end{equation}

Note that conditions~\eqref{con_at_dis} is
obtained by Euler approximation of $w'(hi)$. Similarly,
the objective function~\eqref{eqn_obj_discr} is a discrete
approximation of the integral appearing in~\eqref{obj_fun_cont}.
By setting $f(w)=\phi(w)$, $a=0$, $g_1(w)=0$, $g_n(w)=0$ and,
for $i = 2, \ldots, n-1$,
\[
g_i(w)=\bigwedge \left\{\bar v^2,
\frac{A_N}{|k(h(i - 1))|}, h A_T + w_{i-1}, h A_T + w_{i+1}\right\}\,,
\]
Problem~\eqref{eqn_problem_dis} takes on the form of
Problem~\eqref{eqn_prob_class} and, since $g$ is linear with respect to
$w$, it also belongs to the more specific class~\eqref{eqn_prob_class_lin}.
We remark that, with respect to the problem
class~\eqref{eqn_prob_class_lin}, we minimize a decreasing function
which is equivalent to maximizing an increasing function.

Our previous works~\cite{Minari16},~\cite{MinSCL17} present an algorithm,
with linear-time computational complexity with respect to the number of variables
$n$, that provides an optimal solution of Problem~\eqref{eqn_problem_dis}.
This algorithm is a specialization of the algorithms proposed in this paper which
exploits some specific feature of Problem~\eqref{eqn_problem_dis}.
In particular, the key property of Problem~\eqref{eqn_problem_dis}, which
strongly simplifies its solution, is that functions $g_i$ fulfill the so-called
\emph{superiority condition} 
\[
g_i(w_{i-1},w_{i+1}) \geq w_{i-1}, w_{i+1},
\]
i.e., the value of function $g_i$ is not lower than each one of its arguments.

\subsubsection{Speed planning for robotic manipulators}
The technical details of this second example are more involved and we refer the reader
to~\cite{DBLP:journals/corr/abs-1802-03294}
for the complete discussion.
Let $\mathbb{R}^p$ be the
configuration space of a robotic manipulator with $p$-degrees of
freedom. The coordinate vector $\bq$ of a trajectory in $U$ satisfies the dynamic equation
\begin{equation}
\label{eq:manip}
\bD(\bq)\ddot{\bq} + \bC(\bq,\dot{\bq} )\dot{\bq} + \belll(\bq) = \btau,
\end{equation}
where $\bq \in \Real^{p}$ is the generalized position vector, $\btau \in \Real^{p}$ is the generalized force vector, $\bD(\bq)$ is the mass matrix, $\bC(\bq,\dot{\bq})$ is the matrix accounting for centrifugal and
Coriolis effects (assumed to be linear in $\dot{\bq}$) and $\belll(\bq)$ is the vector accounting for joints position dependent forces, including gravity.
Note that we do  not consider Coulomb friction forces.

Let $\bgamma \in C^2([0,s_{f}],\Real^{p}) $ be a function such that
($\forall \lambda \in [0,s_f]$)
$\lVert \bgamma^\prime(\lambda) \rVert =1$.
The image set  $\bgamma([0,s_f])$ represents the
coordinates of the elements of a reference path.
In particular, $\bgamma(0)$ and $\bgamma(s_f)$ are the coordinates of
the initial and final configurations. Define $t_{f}$ as the time when the robot reaches the end of the path. Let
 $\lambda : [0, t_f] \rightarrow [0, s_f]$ be a differentiable monotone increasing function that represents
 the position of the robot as a function of time and let   $ v : [0, s_f] \rightarrow [0, +\infty]$ be such that
  $\left( \forall t \in [0,t_f]\right) \dot{\lambda}(t) = v(\lambda(t))$. Namely, $v(s)$ is the velocity of the
 robot at position $s$. We impose ($\forall s \in [0,s_{f}]$) $v(s) \ge 0$.
For  any $t \in [0,t_f]$, using the chain rule, we obtain
\begin{equation}
\label{eq:rep}
\begin{array}{ll}
\bq(t) =& \bgamma(\lambda(t)),\\[8pt]
\dot{\bq}(t) = & \bgamma^{\prime}(\lambda(t))v(\lambda(t)),\\[8pt]
\ddot{\bq}(t) = & \bgamma^{\prime}(\lambda(t))v^\prime(\lambda(t))v(\lambda(t)) + \bgamma^{\prime\prime}(\lambda(t))v(\lambda(t))^2.
\end{array}
\end{equation}

Substituting (\ref{eq:rep}) into the dynamic equations (\ref{eq:manip}) and setting $s = \lambda(t)$, we rewrite the dynamic equation (\ref{eq:manip}) as follows:\\
\begin{equation}
\label{eq:dynamic}
\bd(s)v^{\prime}(s)v(s) + \bc(s)v(s)^2 + \bg(s) = \btau(s) ,
\end{equation}
where the parameters in (\ref{eq:dynamic}) are defined as
\begin{equation}
\label{eq:dynamic_parameters}
\begin{array}{l}
\bd(s) = \bD(\bgamma(s))\bgamma^{\prime}(s),\\ [8pt]
\bc(s) =  \bD(\bgamma(s))\bgamma^{\prime\prime}(s)  + \bC(\bgamma(s),\bgamma^{\prime}(s))\bgamma^{\prime}(s),  \\ [8pt]
\bg(s) = \belll(\bgamma(s)).
\end{array}
\end{equation}
The objective function is given by the overall travel time $t_f$
defined as
\begin{equation}
\label{eq:objective}
\displaystyle t_f = \int_0^{t_f}1\,dt = \int_{0}^{s_f} v(s)^{-1}\, ds.
\end{equation}

Let $\bmu, \bpsi, \balpha : \left[ 0, s_f \right] \rightarrow \Real^{p}_{+}$ be assigned bounded functions and
consider the following minimum time problem:
\begin{subequations}\label{prob:1}
\begin{align}
& \displaystyle\min_{v \in C^{1},\btau\in C^{0}} \displaystyle\int_0^{s_f} v(s)^{-1} \, ds, \label{obj:v}\\
\textrm{subject to }\ &  \ (\forall s \in [0,s_{f}]) \nonumber\\
& \bd(s)v^{\prime}(s)v(s) + \bc(s)v(s)^2 + \bg(s) = \btau(s), \label{con:dynamic}\\
& \bgamma^{\prime}(s)v(s) = \dot{\bq}(s),\label{con:kinematic1} \\
&   \bgamma^{\prime}(s)v^\prime(s)v(s) + \bgamma^{\prime\prime}(s) v(s)^{2} = \ddot{\bq}(s),\label{con:kinematic2} \\
& \lvert  \btau(s) \rvert  \le \bmu(s), \label{con:force_bound}\\
&  \lvert   \dot{\bq}(s) \rvert \le \bpsi(s),\label{con:vel_bound} \\
& \lvert \ddot{\bq}(s) \rvert \le \balpha (s), \label{con:acc_bound}\\
&v(s) \ge 0, \label{con:positive-velocity} \\
& v(0) = 0, \, v(s_f) =  0,  \label{con:interpolation}
\end{align}
\end{subequations}
where (\ref{con:dynamic}) represents the robot dynamics,
(\ref{con:kinematic1})-(\ref{con:kinematic2}) represent the
relation between the path $\bgamma$ and the generalized
position $\bq$ shown in~(\ref{eq:rep}), (\ref{con:force_bound})
represents the bounds on generalized forces,
(\ref{con:vel_bound}) and (\ref{con:acc_bound}) represent the
bounds on joints velocity and acceleration, respectively.
Constraints~(\ref{con:interpolation}) specify the interpolation
conditions at the beginning and at the end of the path.

After some manipulation and using a carefully chosen finite
dimensional approximation (again,
see~\cite{DBLP:journals/corr/abs-1802-03294} for the details),
Problem~\eqref{prob:1} can be reduced to form (see
Proposition~8 of~\cite{DBLP:journals/corr/abs-1802-03294}).

\begin{equation}
\label{eq:probl}
\begin{aligned}
& \min_w \phi(w) \\
\textrm{subject to }\ & w_i\leq f_{j,i} w_{i+1} + c_{j,i} & i=1,\ldots,n-1, \quad j=1,\ldots,p, \\[5pt]
& w_{i+1}\leq b_{k,i} w_{i} + d_{k,i} & i=1,\ldots,n-1, \quad k=1,\ldots,p,\\[5pt]
& 0\leq w_i\leq u_i & i=1,\ldots,n,
\end{aligned}
\end{equation}
where, $\phi$ is defined as in~\eqref{eqn_obj_discr} and
$w = (w_1, \ldots, w_n)^T$.
For $i = 1, \ldots, n$, $w_i = v((i-1)h)^2$, $h = \frac{s_f}{n-1}$, is the
squared manipulator speed at configuration $\bgamma((i-1)h)$.
Moreover $u_i$, $f_{j,i}$, $c_{j,i}$, $b_{k,i}$, $d_{k,i}$ are nonnegative
constant terms depending on problem data.

Problem~\eqref{eq:probl} belongs to classes~\eqref{eqn_prob_class}
and~\eqref{eqn_prob_class_lin}.
Also in this case,  the performance of the algorithms proposed in this
paper can be enhanced by exploiting some further specific features of
Problem~\eqref{eq:probl}. In particular,
in~\cite{DBLP:journals/corr/abs-1802-03294}, we were able to develop a
version of the algorithm with optimal time-complexity $O(n p)$.

\subsubsection{Dynamic Programming} \label{sec:motivation}

This section is based on Appendix~A of~\cite{bardi2008optimal},
to which we refer the reader for more detail.
Consider a control system defined by the following differential
equation in $\Real^n$:
\begin{equation} \label{eq:controlProblem}
\begin{cases}
\dot{x}(t) = f(x(t),u(t)) \\
x(0) = x_{0},
\end{cases}
\end{equation}

\noindent
where $f:\Real^n \times U \rightarrow \Real^n$ is a continuous
function, $x_0$ is the initial state, $u(t) \in U \subset \Real^m$ is
the control input and $U$ is a compact set of admissible controls.
Consider an infinite horizon cost functional defined as follows
\begin{equation}\label{eq:costFunc}
J_{x_0} (u) = \int\limits_0^\infty g(x(t), u(t)) e^{-\lambda t} dt,
\end{equation}

\noindent
where $g:\Real^n \times U  \rightarrow  \Real$ is a continuous
cost function.
The viscosity parameter $\lambda$ is a positive real constant.
Following~\cite{bardi2008optimal}, we assume that there exist
positive real constants
$L_f$, $L_g$, $C_f$, $C_g$ such that,
$\forall x_1, x_2 \in \Real^n$, $\forall u \in U$,
\begin{align*}
| f(x_1, u) - f(x_2, u) | \leq L_f | x_1 - x_2 |,\qquad &
\left\|f(x_1, u)\right\|_\infty \leq C_f,\\
| g(x_1, u) - g(x_2, u) | \leq L_g | x_1 - x_2 |,\qquad &
\left\|g(x_1, u)\right\|_\infty \leq C_g.
\end{align*}

Define the value function $v: \Real^n \to \Real$ as
\[
v(x_0) = \inf_{u \in U} J_{x_0}(u).
\]

\noindent
As shown in \cite{bardi2008optimal}, the value function $v$ is
the unique viscosity solution of the Hamilton-Jacobi-Bellman (HJB) equation:
\begin{equation}\label{eq:HJ}
\lambda v(x) + \sup_{u \in U}\{- \nabla v(x) f(x, u) - g(x, u)\} = 0,\quad
x \in \Real^n,
\end{equation}

\noindent
where $\nabla v$ denotes the gradient of $v$.

In general, a closed form solution of the partial differential
equation~\eqref{eq:HJ} does not exist.
Various numerical procedures have been developed to compute
approximate solutions, such as in~\cite{4554208}, \cite{bardi2008optimal}
\cite{6328288}, \cite{Wang01062000}.
 
In particular,~\cite{bardi2008optimal} presents an approximation
scheme based on a finite approximation of state and control
spaces and a discretization in time.
Roughly speaking, in~\eqref{eq:HJ} one can approximate
$\nabla v(x) f(x,u) \simeq h^{-1} (v(x + h f(x,u)) - v(x))$,
where $h$ is a small positive real number that represents an
integration time.
In this way,~\eqref{eq:HJ} becomes
\begin{equation*}
(1 + \lambda h) v(x) = \min_{u \in U}\{ v(x + h f(x,u)) + h g(x,u) \} = 0,
\ x \in \Real^n,
\end{equation*}

\noindent
and, by approximating  $(1 + \lambda h)^{-1} \simeq (1 - \lambda h)$,
$(1 + \lambda h)^{-1} h \simeq h$, one arrives at the following
HJB equation in discrete time
\begin{equation}\label{eq:HJtime}
v_{h}(x) = \min_{u \in U}\left\{(1-\lambda h)v_{h}(x+hf(x,u)) + hg(x,u) \right\},\ x \in \Real^n.
\end{equation}

\noindent
For a more rigorous derivation of~\eqref{eq:HJtime},
again, see~\cite{bardi2008optimal}.

A triangulation is computed on a finite set of vertices
$\mathcal{T} = \{x_i\}_{i \in \mathcal{V}} \subset \Real^n$, with
$\mathcal{V} \subseteq \mathbb{N}$ and $|\mathcal{V}| = N$.
Evaluating~\eqref{eq:HJtime} at $x \in \mathcal{T}$, we obtain
\begin{align} \label{eq:HJtimespace}
 v_h(x_i) = \min_{u \in U}\left\{(1 - \lambda h) v_h(x_i + hf(x_i, u)) + hg(x_i, u) \right\},
 \ i \in \mathcal{V}. &
\end{align}

\noindent
Note the dependence of the value cost function on the choice of the integration step $h$.
Using the triangulation, function $v$ can be approximated by a linear
affine function of the finite
set of variables $v_h(x_i)$, with $i \in \{1, \ldots, N\}$.

Theorem~2.1 of Appendix~A of~\cite{bardi2008optimal} shows that, if $\lambda > L_f$ and $h \in \left(\left. 0,\frac{1}{\lambda}\right]\right.$, system~\eqref{eq:HJtimespace} has a unique solution that converges
uniformly to the solution of~\eqref{eq:HJ} as $h,d,\frac{d}{h}$ tend
to $0$, where $d$ is the maximum diameter of the simplices used in the triangulation.
Note that, for convergence results, one should choose $\lambda$ large enough since it is
bounded from below by $L_f$.

To further simplify~\eqref{eq:HJtimespace}, it is possible to discretize the control space,
substituting $U$ with a finite set  of controls $\{u_\ell\}_{\ell\ \in \mathcal{L}}$, so that we can
replace~\eqref{eq:HJtimespace} with
\begin{equation} \label{eq:HJtimespacecontols}
\begin{aligned}
 v_h(x_i) = \min_{\ell \in \mathcal{L}}\left\{(1-\lambda h) v_h(x_i + hf(x_i, u_\ell)) + hg(x_i, u_\ell) \right\},
 \ i \in \mathcal{V}.
\end{aligned}
\end{equation}

Figure~\ref{fig:triang} illustrates a step of construction of problem~\eqref{eq:HJtimespacecontols}.
Namely, for each node of the triangulation $x_i$ and each value of the control $u_\ell$, all end points
$x_i + h f(x_i,u_\ell)$ of the Euler approximation of the solution of~\eqref{eq:controlProblem} from the
initial state $x_i$ are computed. The value cost function for these end points is given by a convex
combination of its values on the triangulation vertices.

\begin{figure}[!ht]
  \centering
   \includegraphics[width=0.5\columnwidth]{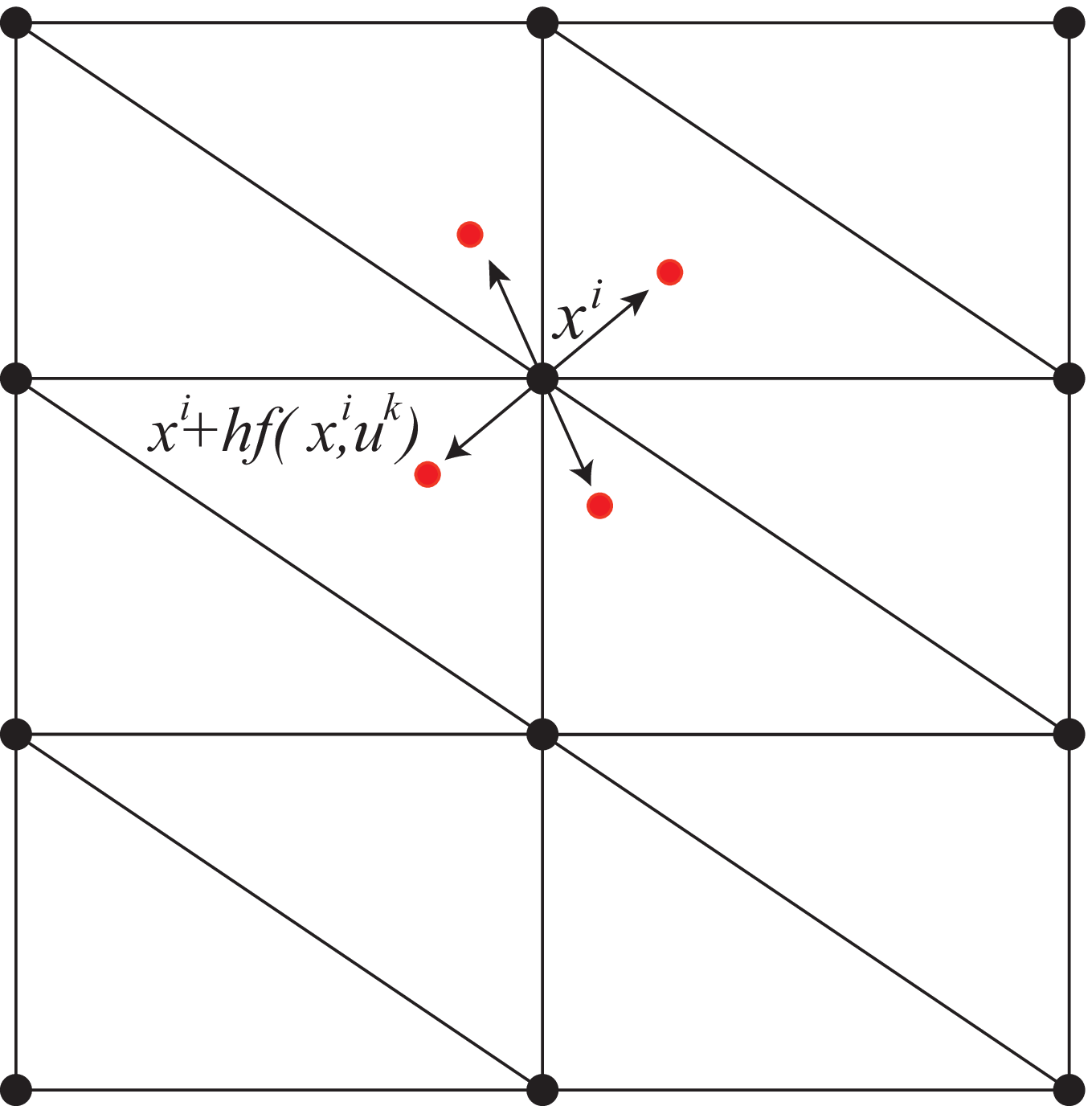}
  \caption{Approximation of the HJB equation on a triangulation with four controls.}
  \label{fig:triang}
\end{figure}

Set vector $w:= [w_1,\ldots,v_n]^T={[v_h(x_1), v_h(x_2), \ldots, v_h(x_N)]}^T$,
in this way $w \in \Real^N$ represents
the value of the cost function on the grid points.

Note that, for each $x_i,u_\ell$, the right-hand side of~\eqref{eq:HJtimespacecontols} is affine with
respect to $w$, so that Problem~\eqref{eq:HJtimespacecontols}
can be rewritten in form
\[
\begin{aligned}
& \max_w \sum_{i} w_i\\
\textrm{subject to }\ & 
0 \leq w \leq \underset{\ell \in \mathcal{L}}{\glb} \left\{ A_\ell w + b_\ell
  \right\},\
w \leq \frac{1}{\lambda}\,,
\end{aligned}
\]
where for $\ell \in \mathcal{L}$,
$A_\ell \in \Real^{N \times N}$ are suitable nonnegative matrices and
$b_\ell \in \Real^{N}$ are suitable nonnegative vectors. Hence,
Problem~\eqref{eq:HJtimespacecontols} belongs to class~\eqref{eqn_prob_class_lin}.
Moreover, observe that if $h$ is sufficiently small, matrices
${\left\{ A_\ell \right\}}_{\ell \in \mathcal{L}}$ are dominant diagonal.

\subsection{Statement of Contribution}
The main contributions of the paper are the following ones:

\begin{itemize}
\item We develop a new procedure (Algorithm~\ref{alg:eps_sol}) for the solution of
  Problem~\eqref{eqn_prob_class} and a more specific one
  (Algorithm~\ref{alg:consensus}) for its
  subclass~\eqref{eqn_prob_class_lin}. We prove the correctness of
  these solution methods.
\item With numerical experiments, we show that the proposed
  algorithms outperform generic commercial solvers in the solution of linear problem~\eqref{eqn_prob_class_lin}.
\end{itemize}

\subsection{Notation}\label{subsec:notation}

The set of nonnegative real numbers is denoted by
$\Real_{+} := [0, +\infty)$ and $\underline{0}$ denotes the
zero vector of $\Real^n$.

Given $n, m \in \mathbb{N}$, let $x \in \Real^n$ and
$A \in \Real^{n \times m}$, for $i \in \{ 1, \ldots, n \}$, we denote the
$i$-th component of $x$ with ${[x]}_i$ and the $i$-th row of $A$
with ${[A]}_{i*}$; further, for $j \in \{ 1, \ldots, m \}$ we denote
the $j$-th column of $A$ with ${[A]}_{*j}$ and the $ij$-th element
of $A$ with ${[A]}_{ij}$.

Function $\left\| \cdot \right\|_\infty: \Real^n \rightarrow \Real_+$
is the infinity norm, namely the maximum norm, of $\Real^n$ (i.e.,
$\forall \ x \in \Real^n \ \left\| x \right\|_\infty = \max\limits_{i \in \{1, \ldots, n\}}{| {[x]}_i |}$);
$\left\| \cdot \right\|_\infty$ is also used to denote the induced matrix norm.
Given a finite set $S$, the cardinality of $S$ is denoted by $|S|$, the power
set of $S$ is denoted by $\wp(S)$ and symbol $\varnothing$ denotes the
empty set.

Consider the binary relation $\leq$ defined on $\Real^n$ as follows
\[
\forall x, y \in \Real^{n}\
(x \leq y\ \Longleftrightarrow\ y - x \in \Real_+^n).
\]
It is easy to verify that $\leq$ is a \textit{partial order} of
$\Real^n$.

Finally, given a nonempty set $\mathcal{V}$ let us define a priority
queue $Q$ as a finite subset of
$\mathcal{Q} := \mathcal{V} \times \mathbb{R}$ such that,
if $(v, q) \in Q$, then, no other element $(\bar v, \bar q) \in Q$ can
satisfy that $\bar v = v$.
Let us also define two operations on priority queues:
$\text{Enqueue}: \wp(\mathcal{Q}) \times \mathcal{Q} \rightarrow \mathcal{Q}$,
which, given $Q \in \wp(\mathcal{Q})$ and $(v, q) \in \mathcal{Q}$,
if $Q$ does not contain elements of the form $(v, p)$,
with $p \geq q$, then $\text{Enqueue}$ adds $(v, q)$ to the priority queue
$Q$ and removes any other element of the form $(v, p)$, with $p < q$,
if previously present.
The second operation we need on priority queues is
$\text{Dequeue}: \wp(\mathcal{Q}) \rightarrow \wp(\mathcal{Q}) \times \mathcal{V}$
which extracts from a priority queue $Q$ the pair $(v, q)$ with highest priority
(i.e., it extracts $(v, q) \in Q$ such that $\forall (\bar v, \bar q) \in Q, q \geq \bar q$)
and returns element $v$.

\section{Characterization of Problem~\eqref{eqn_prob_class}}
\label{sec:prob_class}

In this section, we consider Problem~\eqref{eqn_prob_class} 
with the additional assumption $g(a) \geq a$ which guarantees
that the feasible set of Problem~\eqref{eqn_prob_class} 
\[
\Sigma=\{x \in \Real^n: a \leq x \leq g(x)\}\,
\]
is non-empty.

For any $\Gamma \subset \Sigma$ define $\bigvee \Gamma$
as the smallest $x \in \Sigma$, if it exists, such that
$(\forall y \in \Gamma)\, x \geq y$.
We call $\bigvee \Gamma$ the \emph{least upper bound} of $\Gamma$.
Note that $\bigvee \varnothing = a$.
The following proposition shows that $\bigvee \Gamma$ exists.
\begin{prpstn}
\label{prop_closure}
For any $\Gamma \subset \Sigma$, $\bigvee \Gamma$ exists.
\end{prpstn}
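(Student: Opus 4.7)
The plan is to construct $\bigvee \Gamma$ explicitly as the componentwise supremum of $\Gamma \cup \{a\}$ in $\Real^n$, and then verify both that this vector lies in $\Sigma$ and that it is dominated by every upper bound of $\Gamma$ inside $\Sigma$.

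First I would observe that $\Sigma$ is bounded above by $U$: indeed, any $x \in \Sigma$ satisfies $x \leq g(x) \leq U$ by~\eqref{eqn_for_f}. Since $\Gamma \subset \Sigma$ and $a \in \Sigma$ (by the standing hypothesis $g(a) \geq a$), the set $\Gamma \cup \{a\}$ is nonempty and bounded componentwise above by $U$. Therefore, for each $i \in \{1,\ldots,n\}$ the real number
\[
v_i := \sup_{y \in \Gamma \cup \{a\}} [y]_i
\]
is well-defined and finite, with $[a]_i \leq v_i \leq [U]_i$. Set $v=(v_1,\ldots,v_n)^T$; by construction $v$ is the least upper bound of $\Gamma \cup \{a\}$ with respect to the partial order $\leq$ on $\Real^n$.

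The key step is to show $v \in \Sigma$, i.e.\ $a \leq v \leq g(v)$. The inequality $a \leq v$ is immediate because $a$ is one of the elements over which the sup is taken. For $v \leq g(v)$, fix $i$ and use the monotonicity of $g_i$ together with its independence from $x_i$: for every $y \in \Gamma$ we have $y \leq v$ and $y \in \Sigma$, hence $[y]_i \leq g_i(y) \leq g_i(v)$; for $y = a$, the assumption $g(a) \geq a$ and monotonicity give $[a]_i \leq g_i(a) \leq g_i(v)$. Taking the supremum over $y \in \Gamma \cup \{a\}$ yields $v_i \leq g_i(v)$ for each $i$, so $v \leq g(v)$ and therefore $v \in \Sigma$.

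It remains to check minimality. If $w \in \Sigma$ satisfies $w \geq y$ for every $y \in \Gamma$, then $w$ is an upper bound in $\Real^n$ of $\Gamma$, and since $w \in \Sigma$ also $w \geq a$; hence $w$ is an upper bound of $\Gamma \cup \{a\}$ and must dominate the componentwise sup, giving $w \geq v$. This shows $v = \bigvee \Gamma$ and, in passing, $\bigvee \varnothing = a$. I do not anticipate a real obstacle: the one place care is needed is verifying $v \leq g(v)$, and the hypotheses on $g$ (monotonicity in every variable, together with the feasibility inequality $y \leq g(y)$ valid for $y \in \Sigma$) are exactly what makes the componentwise supremum preserve membership in $\Sigma$.
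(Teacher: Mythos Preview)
Your argument is correct and takes a genuinely different route from the paper. The paper first shows that $\Sigma$ is closed under the binary join $\vee$ (so under finite joins), then argues via compactness: it takes the componentwise supremum $x^+$ in $\Real^n$, builds a sequence in $\Sigma$ converging to $x^+$ by joining finitely many near-maximizers in each coordinate, and concludes $x^+\in\Sigma$ because $\Sigma$ is compact. You instead verify $v\leq g(v)$ directly from monotonicity of each $g_i$, bypassing both the finite-join lemma and the topological step. Your approach is more elementary and in fact uses strictly less: you never invoke continuity of $g$ or compactness of $\Sigma$, and you handle the empty case and the minimality claim explicitly, whereas the paper leaves those implicit. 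The paper's route has the minor advantage that the finite-join closure is reusable (it feeds into the lattice discussion that follows), but as a proof of this proposition yours is cleaner.
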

\begin{proof}

We first prove that, if $x,y \in \Sigma$, then $x \vee y \in \Sigma$
(recall that $\vee$ denotes the component-wise maximum).
It is obvious that $x \vee y \geq a$. Thus, we only need to prove
that, for each $j=1,\ldots,n$, $[y \vee x]_j \leq g_j(x \vee y)$.
To see this,
let us assume, w. l. o. g. , that $[x]_j \leq [y]_j$. Since $y \in
\Sigma$, then $[y]_j \leq g_j(y)$. Moreover, $g_j(y) \leq g_j(y \vee
x)$ since $g_j$ is monotone
non decreasing, so that $[y \vee x]_j \leq g_j(y \vee x)$ as we wanted
to prove. 

Set $\Sigma$ is closed since it is defined by non strict
inequalities of a continuous function, $\Sigma$ is bounded by
assumption,
hence $\Sigma$ is compact.
Set $x^+=\bigvee \Sigma$,
note that $x^+ \leq U$ since $(\forall x \in \Sigma) x \leq
U$, where $U$ is defined in~\eqref{eqn_for_f}.
There exists a sequence $x: \mathbb{N} \to
\Sigma$
such that $\lim_{k \to \infty} x(k)=x^+$. Namely, for any
$k >0$, choose $x^{(1)}_k,\ldots,x^{(n)}_k \in \Sigma$ such that
$[x^+-x^{(i)}_k]_i< k^{-1}$ and set $x(k) =\bigvee \{x^{(1)}_k,\ldots,x^{(n)}_k\}$.
Being $\Sigma$ compact, $\Sigma$ is
also sequentially compact and $x^+ \in \Sigma$.
\end{proof}

Similarly, define $\bigwedge \Gamma$ as the largest $x$, if it exists,
such that $(\forall y \in \Gamma) x \leq y$, we call $\bigwedge
\Gamma$ the \emph{greatest lower bound} of $\Gamma$.

For $x,y \in \Sigma$, note that $x \vee y = \bigvee \{x,y\}$,
$x \wedge y = \bigwedge \{x,y\}$.

The following proposition characterizes set $\Sigma$ with
respect to operations $\vee$, $\wedge$. In particular, it shows that
the component-wise minimum and maximum of each subset of $\Sigma$
belongs to $\Sigma$.

\begin{prpstn}
Set $\Sigma$ with operations $\vee,\wedge$ defined above is a
complete lattice.
\end{prpstn}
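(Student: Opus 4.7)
The plan is to leverage Proposition~\ref{prop_closure} to get half of the complete-lattice structure for free, and then derive existence of arbitrary infima from existence of arbitrary suprema via the standard order-theoretic construction.

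First I would verify the prerequisites. The relation $\leq$ was already noted to be a partial order on $\Real^n$, so it is also a partial order when restricted to $\Sigma$. Moreover, $\Sigma$ is nonempty: the standing assumption $g(a) \geq a$ together with $a \leq a$ shows that $a \in \Sigma$. Proposition~\ref{prop_closure} delivers $\bigvee \Gamma \in \Sigma$ for every $\Gamma \subseteq \Sigma$ (with $\bigvee \varnothing = a$ being the least element of $\Sigma$, since by definition every element of $\Sigma$ satisfies $x \geq a$).

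The main step is to produce $\bigwedge \Gamma$ for an arbitrary $\Gamma \subseteq \Sigma$. I would define
\[
L_\Gamma = \{\, x \in \Sigma : x \leq y \text{ for all } y \in \Gamma \,\},
\]
observe that $a \in L_\Gamma$ (hence $L_\Gamma$ is nonempty), and set $z := \bigvee L_\Gamma$, which exists in $\Sigma$ by Proposition~\ref{prop_closure}. I would then check the two defining properties of the greatest lower bound. For the lower-bound property, fix $y \in \Gamma$; every element of $L_\Gamma$ is $\leq y$, so $y$ is an upper bound for $L_\Gamma$ in $\Sigma$, and by the minimality clause in the definition of $\bigvee L_\Gamma$ we get $z \leq y$. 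Thus $z \in L_\Gamma$. For the ``greatest'' property, any $x \in \Sigma$ with $x \leq y$ for all $y \in \Gamma$ lies in $L_\Gamma$ and is therefore $\leq z$. This gives $z = \bigwedge \Gamma$.

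Once arbitrary joins and meets exist in $\Sigma$, the pairwise operations $x \vee y = \bigvee\{x,y\}$ and $x \wedge y = \bigwedge\{x,y\}$ are automatic, and $\Sigma$ qualifies as a complete lattice by definition. I do not expect a genuine obstacle: the only subtle point is making sure that the ``$\bigvee$'' used to build the infimum really does land below every $y \in \Gamma$, and this follows immediately from the fact that $\bigvee L_\Gamma$ is defined as the smallest upper bound \emph{within} $\Sigma$, of which every such $y$ is a candidate.
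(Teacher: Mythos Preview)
Your argument is correct and follows essentially the same route as the paper: both rely on Proposition~\ref{prop_closure} (existence of $\bigvee\Gamma$ for all $\Gamma\subseteq\Sigma$) together with the bottom element $a$, and then obtain arbitrary infima from arbitrary suprema. The only difference is cosmetic: the paper invokes the dual of Theorem~2.31 in Davey--Priestley for the ``suprema yield infima'' step, whereas you write out the standard proof of that theorem explicitly via $\bigwedge\Gamma=\bigvee L_\Gamma$.
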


\begin{proof}
It is a consequence of the dual of Theorem~2.31
of~\cite{davey2002introduction}.
Indeed $\Sigma$ has a bottom element ($a$) and $\bigvee \Gamma$
exists for any non-empty  $\Gamma \subset \Sigma$ by
Proposition~\ref{prop_closure}.
\end{proof}

A consequence of the previous definition is that also $\bigwedge \Gamma$ exists.

The following proposition shows that the least upper bound $x^+$ of $\Sigma$
is a fixed point of $g$ and corresponds to an optimal solution of
Problem~\eqref{eqn_prob_class}.

\begin{prpstn}
\label{prop_max_x}
Set
\[
x^+=\bigvee \Sigma\,,
\]
then 
i)
\begin{equation}
\label{eqn_fix_point}
x^+=g(x^+)
\end{equation}

ii) $x^+$ is an optimal solution of problem~\eqref{eqn_prob_class}.
\end{prpstn}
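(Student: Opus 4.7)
The plan is to establish (i) by a fixed-point sandwich and then deduce (ii) immediately from strict monotonicity of $f$.

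For part (i), the first observation is that $x^+ \in \Sigma$. This is free: by the previous proposition, $\Sigma$ is a complete lattice, and $x^+ = \bigvee \Sigma$ is therefore its top element, hence a member of $\Sigma$. In particular $x^+ \leq g(x^+)$, so it remains to prove the reverse inequality $g(x^+) \leq x^+$. The natural way to get this is to show that $y := g(x^+)$ itself lies in $\Sigma$; since $x^+$ is an upper bound for $\Sigma$, this forces $y \leq x^+$.

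To show $y \in \Sigma$ I would verify the two defining conditions separately. For $y \geq a$: since $x^+ \geq a$ componentwise and each $g_i$ is monotone non-decreasing in the components it depends on, $y = g(x^+) \geq g(a) \geq a$, where the last step uses the standing assumption $g(a) \geq a$. For $y \leq g(y)$: apply $g$ to the inequality $x^+ \leq g(x^+) = y$. Here one must be careful that $g_i$ is \emph{constant} in $x_i$, so monotonicity is only invoked in the other coordinates, which is precisely what we are allowed. Componentwise this gives $g_i(x^+) \leq g_i(y)$ for every $i$, that is, $y \leq g(y)$. Combining $y \in \Sigma$ with the least-upper-bound property yields $g(x^+) = y \leq x^+$, and together with $x^+ \leq g(x^+)$ we conclude $x^+ = g(x^+)$.

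For part (ii), feasibility of $x^+$ was already established above. Optimality is then a one-line consequence of the hypotheses on $f$: for every $x \in \Sigma$ the definition of $x^+$ gives $x \leq x^+$ componentwise, and strict monotone increase of $f$ in each component implies $f(x) \leq f(x^+)$. Hence $x^+$ maximizes $f$ over $\Sigma$.

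The only delicate step is the monotonicity argument $x^+ \leq g(x^+) \Rightarrow g(x^+) \leq g(g(x^+))$, because $g$ is not assumed to be monotone in each $g_i$'s \emph{own} $i$-th coordinate; but since $g_i$ does not depend on $x_i$ at all, this coordinate is irrelevant, and monotonicity in the remaining coordinates suffices. Everything else is routine.
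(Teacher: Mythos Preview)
Your proof is correct. The paper takes a different, more citation-heavy route: for (i) it simply invokes the Knaster--Tarski fixed-point theorem, noting that $\Sigma$ is a complete lattice and $g$ is order-preserving, so the greatest post-fixed point is a fixed point; for (ii) it argues by contradiction (if some $x$ had $f(x)>f(x^+)$, strict monotonicity would force $[x]_i>[x^+]_i$ for some $i$, contradicting $x^+=\bigvee\Sigma$). Your argument for (i) is in effect the direct proof of the relevant half of Knaster--Tarski specialized to this setting: you show $x^+\le g(x^+)$ by membership in $\Sigma$, then push $g(x^+)$ back into $\Sigma$ to get the reverse inequality. This is more self-contained and makes explicit why $g(x^+)\in\Sigma$ (including the lower bound $a\le g(x^+)$, which the black-box citation hides). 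One minor remark: your caveat about monotonicity in the $i$-th coordinate is unnecessary, since the paper's hypothesis already states that each $g_i$ is monotone non-decreasing in \emph{all} variables (being constant in $x_i$ is a special case), so $g$ is globally order-preserving and the step $x^+\le y\Rightarrow g(x^+)\le g(y)$ needs no further justification.
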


\begin{proof}
i) It is a consequence of Knaster-Tarski Theorem (see Theorem~2.35
of~\cite{davey2002introduction}), since $(\Sigma,\wedge,\vee)$ is a
complete lattice and $g$ is an order-preserving map.

ii) By contradiction, assume that $x^+$ is not optimal, this implies that
there exists $x \in \Sigma$ such that $f(x) > f(x^+)$. Being $f$
monotonic increasing, this implies that there exists $i \in
1,\ldots,n$ such that $[x]_i > [x^+]_i$, which implies that $x^+ \neq
\bigvee \Sigma$.
\end{proof}

\begin{rmrk}
The previous proposition shows that the actual form of function $f$ is
immaterial to the solution of Problem~\eqref{eqn_prob_class}, since
the optimal solution is $x^+$ for any strictly monotonic increasing
objective function $f$.
\end{rmrk}

The following defines a relaxed solution of
Problem~\eqref{eqn_prob_class}, obtained by allowing an error
on fixed-point condition~\eqref{eqn_fix_point}.

\begin{dfntn}
Let $\epsilon$ be a positive real constant, $x$ is
an $\epsilon$-solution of~\eqref{eqn_prob_class} if 
\[
x \geq a, \quad
\|x -g(x)\|_\infty < \epsilon\,.
\]
\end{dfntn}

The following proposition presents a sufficient condition that
guarantees that a sequence of $\epsilon$-solutions approaches $x^+$ as $\epsilon$ converges to $0$.

\begin{prpstn}
If there exists $\delta>0$ such that
\begin{equation}
\label{eqn_cond_xy}
(\forall x,y \geq a) \, \frac{\|g(x)-g(y)\|_\infty}{\|x-y\|_\infty} \notin [1-\delta,1+\delta] 
\end{equation}
then, there exists a constant $M$ such that, for any $\epsilon>0$,
if $x \in \Real^n$ is an $\epsilon$-solution of~\eqref{eqn_prob_class},
then 
\[
\|x-x^+\|_\infty \leq M \epsilon\,.
\]
\end{prpstn}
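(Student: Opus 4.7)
The plan is to apply the hypothesis directly to the pair $(x, x^+)$, where $x$ is an arbitrary $\epsilon$-solution and $x^+$ is the fixed point of $g$ in $\Sigma$ given by Proposition~\ref{prop_max_x}. Both $x$ and $x^+$ satisfy $x, x^+ \geq a$ (the first by definition of an $\epsilon$-solution, the second since $x^+ \in \Sigma$), so the hypothesis is applicable. If $x = x^+$ the conclusion is trivial, so I assume $x \neq x^+$, which makes the ratio $\|g(x)-g(x^+)\|_\infty / \|x - x^+\|_\infty$ well-defined.

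The key identity is $g(x^+) = x^+$, which converts the fixed-point error of $g$ at $x^+$ into $\|g(x)-g(x^+)\|_\infty = \|g(x) - x^+\|_\infty$. The condition~\eqref{eqn_cond_xy} then forces one of two mutually exclusive subcases.

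In the contracting subcase, $\|g(x)-g(x^+)\|_\infty < (1-\delta)\|x-x^+\|_\infty$. The triangle inequality
\[
\|x - x^+\|_\infty \leq \|x - g(x)\|_\infty + \|g(x) - g(x^+)\|_\infty
\]
combined with $\|x - g(x)\|_\infty < \epsilon$ yields $\|x - x^+\|_\infty < \epsilon + (1-\delta)\|x - x^+\|_\infty$, hence $\|x - x^+\|_\infty < \epsilon/\delta$. In the expanding subcase, $\|g(x)-g(x^+)\|_\infty > (1+\delta)\|x-x^+\|_\infty$. Here I would apply the triangle inequality in the opposite direction,
\[
\|g(x) - g(x^+)\|_\infty = \|g(x) - x^+\|_\infty \leq \|g(x) - x\|_\infty + \|x - x^+\|_\infty < \epsilon + \|x-x^+\|_\infty,
\]
so that $(1+\delta)\|x - x^+\|_\infty < \epsilon + \|x-x^+\|_\infty$, giving the same bound $\|x - x^+\|_\infty < \epsilon/\delta$.

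Both subcases yield the same inequality, so I would conclude by taking $M = 1/\delta$. There is no real obstacle here: the proof is essentially a careful bookkeeping of the triangle inequality together with the dichotomy imposed by~\eqref{eqn_cond_xy}. The only subtlety worth pointing out is that the hypothesis is a global two-sided gap condition on the discrete Lipschitz quotient, so the same argument goes through without needing any smoothness or local information on $g$, and without any assumption that $g$ is a contraction.
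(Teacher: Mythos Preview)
Your proof is correct and follows essentially the same route as the paper: both apply the dichotomy in~\eqref{eqn_cond_xy} to the pair $(x,x^+)$, use $g(x^+)=x^+$, and combine the triangle inequality with $\|x-g(x)\|_\infty<\epsilon$ in each subcase to obtain $\|x-x^+\|_\infty \leq \epsilon/\delta$, i.e., $M=1/\delta$. Your version is slightly more explicit in handling the trivial case $x=x^+$ and in spelling out the two triangle-inequality directions, but the argument is the same.
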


\begin{proof}
Let  $x$ be an $\epsilon$-solution.  By Proposition~\ref{prop_max_x}
we have that
\[
x-x^+= g(x) -g(x^+)  +\xi\,,
\]
where $\|\xi\|_\infty \leq \epsilon$. 

By assumption~\eqref{eqn_cond_xy}, either 
$\|g(x) - g(y)\|_\infty > (1 + \delta) \|x - y\|_\infty$ or
$\|g(x) - g(y)\|_\infty < (1 - \delta) \|x - y\|_\infty$.
In the first case, 
\[
\|x-x^+\|_{\infty}  \geq -\|\xi\|_{\infty} + (1+\delta) \|x-x^+\|_{\infty}\,,
\]
in the second case,
\[
\|x-x^+\|_{\infty}  \leq \|\xi\|_{\infty} + (1-\delta) \|x-x^+\|_{\infty}\,.
\]
In both cases it follows that
\[
 \|x-x^+\|_{\infty} \leq \delta^{-1} \|\xi\|_{\infty} \leq \delta^{-1} \epsilon\,.
\]
\end{proof}

\begin{rmrk}
If condition~\eqref{eqn_cond_xy} is not satisfied, an
$\epsilon$-solution of~\eqref{eqn_prob_class} can be very distant from
the optimal solution $x^+$.
Figure~\ref{fig:hypothesis} refers to a simple instance of
Problem~\eqref{eqn_prob_class} with $x \in \Real$, so that $g$ is a scalar
function. The optimal value $x^+$ corresponds to the maximum value of
$x$ such that $x\leq g(x)$. The figure also shows $\tilde x$, which is
an $\epsilon$-solution, for the value of $\epsilon$ depicted in the
figure. In this case there is a large separation between $x^+$ and
$\tilde x$. Note that in this case function $g$ does not
satisfy~\eqref{eqn_cond_xy}.
\end{rmrk}

\begin{figure}[h!]
\centering

\psfrag{x}{$x$}
\psfrag{g}{$g(x)$}
\psfrag{t}{$x$}
\psfrag{e}{$\epsilon$}
\psfrag{w}{$x^+$}
\psfrag{y}{$\tilde x$}
\includegraphics[width=.6\textwidth]{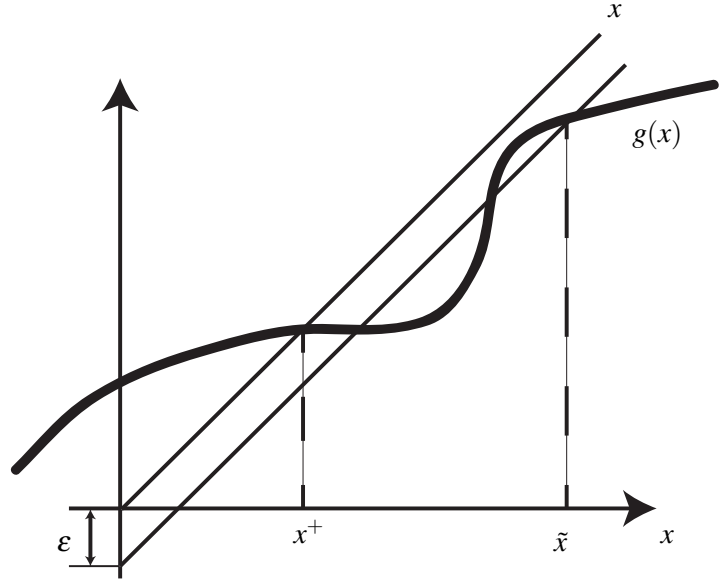}
\caption{Representation of an instance of problem~\eqref{eqn_prob_class} in which conditions~\eqref{eqn_cond_xy} does
  not hold.}
\label{fig:hypothesis}
\end{figure}

\begin{rmrk}\label{remark:fixed_point}
If $g$ is a contraction, namely, if there exists $\gamma \in [0, 1)$,
such that $\forall x, y \in \Real^n\ \| g(x) - g(y) \|_\infty \leq \gamma \|
x - y \|_\infty$ (a subcase of~\eqref{eqn_cond_xy}), then $x^+$
can be found with a standard fixed point iteration

\begin{equation}
\label{PROB}
\begin{cases}
x(k+1) = g(x)\\
x(0) = x_0,
\end{cases}
\end{equation}
and, given $\epsilon > 0$,
an $\epsilon$-solution $x$ of~\eqref{eqn_prob_class} can be computed with
Algorithm~\ref{alg:fixed_point}.
This algorithm, given an input tolerance $\epsilon$, function $g$
and an initial solution $x_0 \in \Real^n$, repeats the fixed point
iteration $x=g(x)$ until $x$ satisfies the definition of
$\epsilon$-solution, that is, until the infinity norm of error vector $\xi=x-g(x)$ is
smaller than the assigned tolerance $\epsilon$.
\end{rmrk}

\begin{algorithm}[h!]
\caption{Fixed Point Iteration.}
\label{alg:fixed_point}
\begin{algorithmic}[1]
\STATE INPUT: initial vector $x_0$, tolerance $\epsilon$,
function $g$.
\STATE OUTPUT: vector $x$.

\STATE

\STATE $x := x_0$

\REPEAT
    \STATE $x_{\textrm{old}} := x$
    \STATE $x := g(x)$ \label{step_fix_point}
    \STATE $\xi := x_{\textrm{old}}-x$
\UNTIL{$\|\xi\|_\infty \leq \epsilon$}
\STATE\RETURN{$x$}
\end{algorithmic}
\end{algorithm}

The special structure of Problem~\eqref{eqn_prob_class} leads to a
solution
algorithm that is much more efficient than
Algorithm~\ref{alg:fixed_point} in terms of overall number of elementary operations.
As a first step, we associate a graph to constraint $g$ of
Problem~\eqref{eqn_prob_class}.

\subsection{Graph associated to Problem~\eqref{eqn_prob_class}}

It is natural to associate to Problem~\eqref{eqn_prob_class} a directed graph
$\mathbb{G} = (V,E)$, where the nodes correspond to the $n$ components
of $x$ and of constraint $g$,
namely $V=\mathcal{V} \cup \mathcal{C}$, with
$\mathcal{V}=\{v_1,\ldots,v_n\}$, $\mathcal{C}=\{c_1,\ldots,c_n\}$,
where $v_i$ is the node associated to $[x]_i$ and
$c_i$ is the node associated to $g_i$.
The edge set $E \subseteq V \times V$ is defined according to the rules:
\begin{itemize}
\item for $i=1,\ldots,n$, there is a directed edge from $c_i$ to $v_i$,
\item for $i=1,\ldots,n$, $j=1,\ldots,n$, there is a directed edge
  from $v_i$ to $c_j$ if $g_j$ depends on $x_i$,
\item no other edges are present in $E$.
\end{itemize}

For instance, for $x \in \mathbb{R}^3$ consider problem
\[
\begin{aligned}
& \max_x f(x)&\\
\textrm{subject to }\ & 
0 \leq x_1 \leq g_1(x_2,x_3) \\
&0 \leq x_2 \leq g_2(x_1) \\
&0 \leq x_3 \leq g_3(x_1,x_2). 
\end{aligned}
\]

The associated graph, with
$\mathcal{V}=\{v_1,v_2,v_3\}$, $\mathcal{C}=\{c_1,c_2,c_3\}$, is given by

\begin{center}
\begin{tikzpicture}[->,>=stealth',shorten >=1pt,auto,node distance=2cm]
\tikzstyle{every state}=[fill=none,draw=black,text=black, minimum size = 29pt]

\node[state] (V1) {$v_1$};
\node[state] (V2) [right of=V1] {$v_2$};
\node[state] (V3) [right of=V2] {$v_3$};
\node[state] (C1) [above of=V1] {$c_1$};
\node[state] (C2) [right of=C1] {$c_2$};
\node[state] (C3) [right of=C2] {$c_3$};

\path (C1) edge (V1)
         (C2) edge (V2)
         (C3) edge (V3)
         (V2) edge (C1)
         (V3) edge (C1)
         (V1) edge (C2)
         (V1) edge (C3)
         (V2) edge (C3);
\end{tikzpicture}
\end{center}

We define the set of neighbors of node $i \in \mathcal{V}$ as
\[
\mathcal{N}(i) := \left\{ j \in \mathcal{V}\ |\ \exists c \in \mathcal{C} : (i, c), (c, j) \in E \right\},
\]
namely, a node $j \in \mathcal{V}$ is a neighbor of $i$ if there
exists a directed path of length two that connects $i$ to $j$.
For instance, in the previous example, $v_1 \in \mathcal{N}(v_3)$ and
 $v_2 \notin \mathcal{N}(v_3)$.
In other words, $v_j \in \mathcal{N}(v_i)$ if constraint $g_j$ depends
on $x_i$.

\subsection{Selective update algorithm for
  Problem~\eqref{eqn_prob_class}}

In Algorithm~\ref{alg:fixed_point}, each time
line~\ref{step_fix_point} is evaluated, the value of
all components of $x$ is updated according to the fixed point
iteration $x=g(x)$, even though many of them may
remain unchanged.  
We now present a more efficient procedure for computing an
$\epsilon$-solution of~\eqref{eqn_prob_class}, in which we update only the
value of those components of $x$ that are known to undergo a variation.
The algorithm is composed of two phases, an initialization and a main loop.
In the \emph{initialization}, $x$ is set to an initial value $x_0$ that is
known to satisfy $x_0 \geq x^+$. Then the fixed point error $\xi=x-g(x)$
is computed and all indexes $i=1,\ldots,n$ for which $[\xi]_i >
\epsilon$ are inserted into a priority queue, ordered with respect to a
policy that will be discussed later. In this way, at the end of the
initialization, the priority queue contains all indexes $i$ for which
the corresponding fixed point error $[\xi]_i$ exceeds $\epsilon$.

Then, the \emph{main loop} is repeated until the priority queue is
empty. First, we extract from the priority queue the index $i$ with the highest priority. Then, we update its value by setting
$[x]_i=g_i(x)$ and update the fixed point error $\xi$
by setting $[\xi]_j=[x]_j-g_j(x)$
for all
variables $j \in \mathcal{N}(i)$. This step is actually the key-point
of the algorithm: we recompute the fixed point error \emph{only} of
those variables that correspond to components of $g$ that we know to
have been affected by the change in variable $[x]_i$.
Finally, as in the initialization, all variables $j \in \mathcal{N}(i)$
such that the updated fixed-point error satisfies $[\xi]_j > \epsilon$
are placed into the priority queue.

The order in which nodes are actually processed depends on the
ordering of the priority
queue. The choice of this ordering turns out to be critical in terms of
computational cost for the algorithm, as can be seen in the numerical
experiments in Section~\ref{subsec:simulations}.
Various orderings for the priority queue will be introduced in
Section~\ref{subsec:simulations} and the ordering choice will be
discussed in more detail. The procedure stops once the priority queue
becomes empty, that is, once none of the updated nodes undergoes a
significant variation.  As we will
show, the correctness of the algorithm is independent on the choice of
the ordering of the priority queue.

We may think of graph $\mathbb{G}$ as a communication network in which
each node transmits its updated value to
its neighbours, whilst all other nodes maintain their value
unchanged.

These considerations lead to Algorithm~\ref{alg:eps_sol}.
This algorithm takes as input an initial vector $x_0 \in \Real^n$,
a tolerance $\epsilon$, function $g$ and the lower bound $a$.
From lines~\ref{alg2_initialization_begin} to~\ref{alg2_initialization_end}
it initializes the solution vector $x$, the priority queue $Q$ and the error
vector $\xi$.
From line~\ref{alg_first_iter_begin} to~\ref{alg_first_iter_end} it adds
into the priority queue those component nodes whose corresponding
component of the error vector $\xi$ is greater than tolerance $\epsilon$.
The priority with which a node is added to the queue will be
discussed in Section~\ref{subsec:simulations}, here symbol * denotes a
generic choice of priority.
Lines from~\ref{alg_main_loop_begin} to~\ref{alg_main_loop_end}
constitute the main loop.
While the queue is not empty, the component node $i$ with highest
priority is extracted from the queue and its value is updated.
Then, each component node $j$ which is a neighbor of $i$ is examined;
the variation of node $j$ is updated and,
if it is greater than tolerance $\epsilon$, neighbor $j$ is added to the
priority queue.
After this, the component corresponding to node $i$ in $\xi$ is set to 0.
Finally, once the queue becomes empty, the feasibility of
solution $x$ is checked and returned along with vector $x$.
We remark that Algorithm~\ref{alg:eps_sol} can be seen as a
generalization of Algorithm~\ref{alg:fixed_point} in~\cite{Cabassi2018},
where a specific priority queue (namely, one based on the values of the
nodes) was employed.
Also note that Algorithm~\ref{alg:eps_sol} can be seen as a bound-tightening
technique (see, e. g., \cite{Belotti2009}) which, however, for this specific class
of problem is able to return the optimal solution.

\begin{algorithm}[h!]
\caption{Solution algorithm for Problem~\eqref{eqn_prob_class}}
\label{alg:eps_sol}
\begin{algorithmic}[1]
\STATE INPUT: initial vector $x_0$, tolerance $\epsilon$, function
$g$, vector $a$.
\STATE OUTPUT: vector $x$, bool $feasible$.
\STATE
\STATE $x := x_0$ \label{alg2_initialization_begin}
\STATE $Q := \varnothing$
\STATE $\xi := x - g(x)$ \label{alg2_initialization_end}
\STATE
\FOR{$i = 1, \ldots, n$} \label{alg_first_iter_begin}
\IF {$[\xi]_i>\epsilon$ }
\STATE $Q := \textrm{Enqueue}(Q,(i,*))$ \label{alg2_enqueue_1}
\ENDIF
\ENDFOR \label{alg_first_iter_end}
\STATE
\WHILE{$Q \neq \varnothing$} \label{alg_main_loop_begin}
    \STATE $(Q,i) := \textrm{Dequeue}(Q)$
    \STATE $[x]_i := [x]_i - [\xi]_i$ \label{alg_change_x}
    \FOR{all $j \in \mathcal{N}(i)$}
        \STATE $[\xi]_j := [x]_j - g_j(x)$ \label{alg_change_xi_2}
        \IF {$[\xi]_j > \epsilon$}
            \STATE $Q := \textrm{Enqueue}(Q,(j,*))$ \label{alg2_enqueue_2}
        \ENDIF
    \ENDFOR
    \STATE $[\xi]_i : =0$ \label{alg_change_xi}
\ENDWHILE \label{alg_main_loop_end}
\STATE
\STATE $feasible := x \geq a$ 
\STATE
\RETURN{$x,feasible$}
\end{algorithmic}
\end{algorithm}

The following proposition characterizes Algorithm~\ref{alg:eps_sol}
and proves its correctness.

\begin{prpstn}
Assume that $x_0 \geq x^+$ and $g(x_0) \geq x_0$, then
Algorithm~\ref{alg:eps_sol} satisfies the following properties:

i) At all times, $x \geq x^+$ and $x \geq g(x)$.

ii) At every evaluation of line~\ref{alg_main_loop_begin}, $x=g(x)+\xi$ and $\xi \geq 0$.

iii) The algorithm terminates in a finite number of steps for any $\epsilon > 0$.

iv) If Problem~\eqref{eqn_prob_class} is feasible, output ``feasible'' is true.

v) If output ``feasible'' is true, then $x$ is an $\epsilon$-feasible
solution of Problem~\eqref{eqn_prob_class}.
\end{prpstn}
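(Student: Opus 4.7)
I would prove (i) and (ii) jointly as a loop invariant for the main \texttt{while} loop and then derive (iii)--(v) from them. Throughout, the initialization hypothesis should be read as $x_0 \geq x^+$ together with $g(x_0) \leq x_0$: this is what guarantees $\xi \geq 0$ at entry and is consistent with the downward updates on line~\ref{alg_change_x}.

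\textit{Invariant.} At the first evaluation of line~\ref{alg_main_loop_begin} we have $x = x_0 \geq x^+$ and $\xi = x_0 - g(x_0) \geq 0$, so (i) and (ii) hold. For the inductive step, let $x$ be the state at the top of the loop and $x'$ the state after line~\ref{alg_change_x}; the two vectors differ only in coordinate $i$, with $[x']_i = [x]_i - [\xi]_i = g_i(x)$ (using (ii) at the top of the loop). Because $g_i$ is constant in $x_i$, $g_i(x') = g_i(x) = [x']_i$, so equality holds in coordinate $i$; for $j \neq i$, monotonicity of $g_j$ together with $x' \leq x$ yields $g_j(x') \leq g_j(x) \leq [x]_j = [x']_j$, preserving $x' \geq g(x')$. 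The bound $x' \geq x^+$ follows from $[x']_i = g_i(x) \geq g_i(x^+) = [x^+]_i$, again by monotonicity and the fixed-point identity of Proposition~\ref{prop_max_x}. The subsequent $\xi$-updates then preserve $x = g(x) + \xi$: for $j \in \mathcal{N}(i)$, $[\xi]_j$ is explicitly reset to $[x']_j - g_j(x')$; for $j = i$, $[\xi]_i := 0$ and $[x']_i = g_i(x')$; for $j \notin \mathcal{N}(i) \cup \{i\}$, $g_j$ does not depend on $x_i$, so $g_j(x') = g_j(x)$ while $[x]_j$ and $[\xi]_j$ are untouched. Non-negativity of $\xi$ is inherited from $x' \geq g(x')$ in each case.

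\textit{From the invariant to (iii)--(v).} Every dequeue of an index $i$ strictly decreases $[x]_i$ by $[\xi]_i > \epsilon$, while (i) keeps $[x]_i \geq [x^+]_i$; hence $i$ can be dequeued at most $\lceil ([x_0]_i - [x^+]_i)/\epsilon \rceil$ times, bounding the total number of iterations and proving (iii). Claim (iv) is immediate from (i): if Problem~\eqref{eqn_prob_class} is feasible then $x^+ \geq a$, and $x \geq x^+ \geq a$ at termination. For (v) I would introduce the side invariant $i \notin Q \Rightarrow [\xi]_i \leq \epsilon$, established by the initialization loop and preserved in the main loop because any coordinate whose $\xi$ is recomputed is re-enqueued precisely when it exceeds $\epsilon$, while the remaining $\xi$-values stay untouched; hence $Q = \varnothing$ on exit implies $\|x - g(x)\|_\infty \leq \epsilon$, which together with the check $x \geq a$ yields an $\epsilon$-feasible solution.

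\textit{Main obstacle.} The delicate step will be verifying that the identity $x = g(x) + \xi$ is preserved for coordinates that are \emph{not} visited during a given iteration. Everything hinges on the structural fact that $g_j$ is constant in $x_j$ and depends on $x_i$ only when $j \in \mathcal{N}(i)$, so updating a single coordinate $[x]_i$ cannot silently perturb $g_j$ for $j \notin \mathcal{N}(i)$; without this observation, stale values of $\xi_j$ held by the algorithm would be incorrect and both the termination count and the $\epsilon$-optimality conclusion would fail.
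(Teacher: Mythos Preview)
Your proposal is correct and follows essentially the same route as the paper: an inductive loop invariant for (i)--(ii), a decrement-by-at-least-$\epsilon$ counting argument for (iii), and the consequences (iv)--(v) drawn from the invariant together with emptiness of $Q$. You are also right to flag the sign in the hypothesis: the condition needed (and the one the paper actually uses in its own inductive step) is $x_0 \geq g(x_0)$, not $g(x_0) \geq x_0$. Your explicit side invariant ``$i \notin Q \Rightarrow [\xi]_i \leq \epsilon$'' for (v) and your observation that $[\xi]_i$ can only increase while $i$ sits in the queue (so $[\xi]_i > \epsilon$ at dequeue time) make precise two points the paper leaves implicit, but do not change the overall strategy.
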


\begin{proof}
i) We prove both properties by induction. Note that $x$ is updated
only at line~\ref{alg_change_x} and that line~\ref{alg_change_x} is
equivalent to $[x]_i=g_i(x)$.
For $m \in \mathbb{N}$, let $x(m)$ be the value of $x$ after the $m$-th
evaluation of line~\ref{alg_change_x}.
Note that $x(0) = x_0 \geq x^+$ and that $x$ is changed only at
step~\ref{alg_change_x}. 
Then $[x(m)]_i = g_i(x(m-1)) \geq g_i(x^+) = x^+$,
where we have used the inductive hypothesis $x(m-1) \geq x^+$ and
the fact that $g(x^+)=x^+$ (by Proposition~\ref{prop_max_x}).

Further, note that $g(x(0)) = g(x_0) \geq x_0$ by assumption.
Moreover, $[x(m)]_i = g_i(x(m-1)) = g_i(x(m))$, since $g_i$ does
not depend on $[x]_i$ by assumption and variables $x(m)$, $x(m-1)$
differ only on the $i$-th component.
By the induction hypothesis,
$[x(m)]_i = g_i(x(m-1)) \leq [x(m-1)]_i$ which implies that $x(m) \leq x(m-1)$.
Thus, in view of the monotonicity of $g$ and of the inductive assumption,
for $k \neq i$, $[g(x(m))]_k = g_k(x(m)) \leq g_k(x(m-1)) \leq [x(m-1)]_k = [x(m)]_k$.

ii)  Condition $x=g(x)+\xi$ is satisfied after
evaluating~\ref{alg2_initialization_end}. Moreover, after
evaluating line~\ref{alg_change_xi}, $[x]_i=[g(x)]_i+[\xi]_i$ and all
indices $j$ for which potentially $[x]_j \neq [g(x)]_j+[\xi]_j$ belong
to set $\mathcal{N}(i)$.
For these indices, line~\ref{alg_change_xi_2} re-enforces
$[x]_j = [g(x)]_j+[\xi]_j$.
The fact that $\xi\geq 0$ is a consequence of point i).

iii) At each evaluation of line~\ref{alg_change_x}
 the value of a component of $x$ is decreased by at
least $\epsilon$. If the algorithm did not terminate, at some
iteration we would have that $x
\ngeq x^+$ which is not possible by i).

iv) If Problem~\eqref{eqn_prob_class} is feasible, then  $x^+\geq a$ is its
optimal solution. By point 1), $x \geq x^+ \geq a$ and output
``feasible'' is true. 

v)  When the algorithm terminates, $Q$ is empty, which implies than $\|x-g(x) \|_{\infty} \leq \epsilon$, if
``feasible'' is true, it is also $x\geq a$ and $x$ is an
$\epsilon$-solution.
\end{proof}

\section{Characterization of Problem~\eqref{eqn_prob_class_lin}}
\label{sec_lin_case}

In this section, we consider Problem~\eqref{eqn_prob_class_lin}
and we propose a solution method that exploits its linear structure and
is more efficient than Algorithm~\ref{alg:eps_sol}.
First of all, we show that Problem~\eqref{eqn_prob_class_lin}
belongs to class~\eqref{eqn_prob_class}. To this end, set
\begin{equation}\label{eq:P_def}
P_\ell := I - D_\ell,
\end{equation}
where $I \in \Real^{n \times n}$ is the identity matrix and, for $\ell
\in \mathcal{L}$, $D_\ell \in \Real^{n \times n}$ is a diagonal
matrix that contains the elements of $A_\ell$ on the diagonal.
Note that here and in what follows we assume that all the diagonal
entries of $A_\ell$ are lower than 1.
Indeed, for values larger than or equal to 1 the corresponding
constraints are redundant and can be eliminated.
The proof of the following proposition is in the appendix.

\begin{prpstn}\label{Proposition:reformulation}
Problem~\eqref{eqn_prob_class_lin} can be reformulated as a
problem of class~\eqref{eqn_prob_class}.  Namely, this is achieved by setting
\begin{equation}\label{def:Ab_hat}
\hat A_\ell := {P_\ell}^{-1}(A_\ell - D_\ell), \quad
\hat b_\ell := {P_\ell}^{-1} b_\ell
\end{equation}
and $\hat g(x) = \underset{\ell \in \mathcal{L}}{\glb} \{\hat A_\ell x + \hat
b_\ell\} \wedge U$.
\end{prpstn}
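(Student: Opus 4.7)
The plan is to verify, in turn, the three requirements that a function $\hat g$ must satisfy in order to make the reformulation fit the template of Problem~\eqref{eqn_prob_class}: (a) each $\hat g_i$ is continuous, monotone non-decreasing in all variables and constant in $x_i$; (b) $\hat g(x) \le U$ holds for every $x$ in the reformulated feasible set (so that condition~\eqref{eqn_for_f} is satisfied); (c) the feasible set of~\eqref{eqn_prob_class_lin} coincides with $\{x : 0 \le x \le \hat g(x)\}$. Once these three items are in place, the reformulation (with $a = 0$ and $f$ unchanged) is immediate.

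First I would establish (a). Since $A_\ell$ has nonnegative entries with diagonal $[A_\ell]_{ii} < 1$ by assumption, $P_\ell = I - D_\ell$ is a diagonal matrix with strictly positive entries, so $P_\ell^{-1}$ exists and is diagonal with entries $(1 - [A_\ell]_{ii})^{-1} > 0$. Because $A_\ell - D_\ell$ has zero diagonal and nonnegative off-diagonal entries, left-multiplication by the nonnegative diagonal $P_\ell^{-1}$ yields $\hat A_\ell \geq 0$ with $[\hat A_\ell]_{ii} = 0$; similarly $\hat b_\ell = P_\ell^{-1} b_\ell \geq 0$. Hence $[\hat A_\ell x + \hat b_\ell]_i$ does not depend on $x_i$ and is monotone non-decreasing in every $x_j$. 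Taking a componentwise infimum over $\ell \in \mathcal{L}$ and then a componentwise minimum with the constant vector $U$ preserves continuity, monotonicity and the ``constant in $x_i$'' property, which yields (a). Property (b) is immediate, since $\hat g(x) = \bigwedge_\ell\{\hat A_\ell x + \hat b_\ell\} \wedge U \le U$ by construction.

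The main step, and the one requiring a short argument, is (c): the equivalence of the feasible sets. I would do this componentwise. Fix $i \in \{1,\dots,n\}$ and $\ell \in \mathcal{L}$; the $i$-th scalar inequality inside $x \le A_\ell x + b_\ell$ reads
\[
x_i \;\le\; [A_\ell]_{ii}\, x_i + \sum_{j \ne i} [A_\ell]_{ij}\, x_j + [b_\ell]_i .
\]
Since $1 - [A_\ell]_{ii} > 0$, this is equivalent to
\[
x_i \;\le\; \frac{1}{1 - [A_\ell]_{ii}}\Bigl(\sum_{j \ne i} [A_\ell]_{ij}\, x_j + [b_\ell]_i\Bigr) = [\hat A_\ell x + \hat b_\ell]_i .
\]
Because $\underset{\ell}{\glb}$ on the right-hand side of~\eqref{eqn_prob_class_lin} is taken componentwise (the choice of $\ell$ may differ from component to component), the constraint $x \le \underset{\ell}{\glb}\{A_\ell x + b_\ell\}$ is equivalent, scalar by scalar and thus jointly, to $x \le \underset{\ell}{\glb}\{\hat A_\ell x + \hat b_\ell\}$; intersecting with $x \le U$ on both sides gives precisely $x \le \hat g(x)$. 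Combined with the unchanged lower bound $x \ge 0$, this shows that~\eqref{eqn_prob_class_lin} and the problem $\max f(x)$ subject to $0 \le x \le \hat g(x)$ have identical feasible sets and identical objective, completing the reduction.

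I expect the only real obstacle to be the bookkeeping for the $\underset{\ell}{\glb}$ operator, namely making sure that the equivalence is truly componentwise and therefore commutes with taking the greatest lower bound; everything else is a direct consequence of $P_\ell$ being a positive diagonal matrix.
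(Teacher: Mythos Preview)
Your proposal is correct and follows essentially the same approach as the paper: both arguments hinge on $P_\ell=I-D_\ell$ being positive diagonal, which allows dividing each scalar inequality $x_i\le [A_\ell]_{ii}x_i+\sum_{j\ne i}[A_\ell]_{ij}x_j+[b_\ell]_i$ by $1-[A_\ell]_{ii}>0$ to obtain the equivalent $x_i\le[\hat A_\ell x+\hat b_\ell]_i$, and then observe that $\hat A_\ell\ge 0$ with zero diagonal so that $\hat g$ satisfies the structural hypotheses of Problem~\eqref{eqn_prob_class}. The paper carries out the same manipulation in compact matrix notation, whereas you spell it out componentwise and are a bit more explicit about why the equivalence commutes with the componentwise $\bigwedge$; the content is the same.
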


Then we apply the results for Problem~\eqref{eqn_prob_class} to Problem~\eqref{eqn_prob_class_lin}. The following proposition is a corollary of Proposition~\ref{prop_max_x}.
\begin{prpstn}
Problem~\eqref{eqn_prob_class_lin} is feasible and
its optimal solution $x^+$ satisfies the two equations
\begin{equation}\label{eq:problem_intro}
x^+ =\underset{\ell \in \mathcal{L}}{\glb} \left\{ \hat A_\ell x^+ +
  \hat b_\ell \right\} \wedge U\,.
\end{equation}
\begin{equation}\label{eq:fixed_point_ref}
x^+ =\underset{\ell \in \mathcal{L}}{\glb} \left\{ A_\ell x^+ +
  b_\ell \right\} \wedge U\,.
\end{equation}

\end{prpstn}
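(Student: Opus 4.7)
The plan is to derive both equations from the machinery already in place. Feasibility is immediate: $x = 0$ satisfies $0 \leq x$, the bound $x \leq U$ (which presupposes $U \geq 0$, as otherwise the upper-bound constraint is trivially infeasible), and $A_\ell \cdot 0 + b_\ell = b_\ell \geq 0$ componentwise, so $0 \leq \glb_\ell\{b_\ell\}$. Equivalently, in the reformulation given by Proposition~\ref{Proposition:reformulation}, one checks that $\hat g(0) = \glb_\ell\{\hat b_\ell\} \wedge U \geq 0$, since $P_\ell^{-1}$ is diagonal with entries $(1 - [D_\ell]_{ii})^{-1} > 0$ under the standing assumption $[D_\ell]_{ii} < 1$, and therefore $\hat b_\ell = P_\ell^{-1} b_\ell \geq 0$. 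This supplies the precondition $\hat g(a) \geq a$ (with $a = 0$) needed to invoke Proposition~\ref{prop_max_x}.

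Next, applying Proposition~\ref{prop_max_x} directly to the reformulated problem with $\hat g(x) = \glb_\ell\{\hat A_\ell x + \hat b_\ell\} \wedge U$ yields $x^+ = \hat g(x^+)$, which is exactly \eqref{eq:problem_intro}. The remaining work is to transfer this fixed-point identity back into the original $A_\ell, b_\ell$ form to get \eqref{eq:fixed_point_ref}. I would do this componentwise. Fix $i$ and $\ell$, and observe that the scalar inequality $[x]_i \leq [A_\ell x + b_\ell]_i$ can be rewritten as
\[
(1 - [D_\ell]_{ii})[x]_i \leq [(A_\ell - D_\ell) x]_i + [b_\ell]_i ,
\]
and since $1 - [D_\ell]_{ii} > 0$ this is equivalent to
\[
[x]_i \leq (1 - [D_\ell]_{ii})^{-1}\bigl([(A_\ell - D_\ell) x]_i + [b_\ell]_i\bigr) = [\hat A_\ell x + \hat b_\ell]_i .
\]
Crucially, this equivalence preserves equality: $[x]_i = [A_\ell x + b_\ell]_i$ iff $[x]_i = [\hat A_\ell x + \hat b_\ell]_i$.

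To conclude, recall that $\glb$ is componentwise minimum, so \eqref{eq:problem_intro} reads, coordinatewise, $[x^+]_i = \min\bigl(\min_\ell [\hat A_\ell x^+ + \hat b_\ell]_i,\ [U]_i\bigr)$. The equivalence above gives $[x^+]_i \leq [A_\ell x^+ + b_\ell]_i$ for every $\ell$, and either $[x^+]_i = [U]_i$, or the minimum is attained at some $\ell^\ast$ in the $\hat A$-form, which by equality transfer also attains the minimum in the $A$-form. In either case $[x^+]_i = \min\bigl(\min_\ell [A_\ell x^+ + b_\ell]_i,\ [U]_i\bigr)$, which is precisely \eqref{eq:fixed_point_ref}.

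There is no real hard step here; the proof is essentially bookkeeping that chains Proposition~\ref{Proposition:reformulation} with Proposition~\ref{prop_max_x}. The only subtle point is the equality-transfer property between the two forms of each row constraint, which relies on $P_\ell$ being diagonal with strictly positive diagonal (so that multiplying by $P_\ell^{-1}$ is an order-preserving bijection on each coordinate) and on the fact that $g_i$ does not depend on $[x]_i$, which is what justifies the row-by-row reduction in the first place.
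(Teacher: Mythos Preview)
Your proposal is correct and follows essentially the same approach as the paper: verify $\hat g(0)\geq 0$ to get feasibility, invoke Proposition~\ref{prop_max_x} on the reformulated problem to obtain~\eqref{eq:problem_intro}, and then use the row-by-row equivalence underlying Proposition~\ref{Proposition:reformulation} to pass to~\eqref{eq:fixed_point_ref}. The paper's own proof compresses the last step into a single ``which implies'', relying on the equivalence of constraints already established in Proposition~\ref{Proposition:reformulation}; your componentwise equality-transfer argument makes that implication explicit but is not a different idea.
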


\begin{proof}
Note that $g(0)= b_\ell \wedge U\geq 0$, which implies
that $\Sigma \neq \varnothing$ and that Problem~\eqref{eqn_prob_class}
is feasible.  Then, by Proposition~\ref{prop_max_x}, its solution
$x^+$ satisfies $x^+=g(x^+)$, which implies~\eqref{eq:problem_intro} and~\eqref{eq:fixed_point_ref}.
\end{proof}

The following result, needed below, can be found, e. g., in \cite{heinonen2005lectures}.

\begin{lmm}\label{lemma:lipschitz}
Let $L \in \Real_+$ and $\{g_i: i \in I\}$, with $I$ set of indices, be a
family of functions
$g_i: \Real^n \rightarrow \Real^n$ such that $\forall x, y \in \Real^n$
\[
\| g_i(x) - g_i(y) \|_\infty \leq L \| x - y \|_\infty.
\]
Then, function $g(x) := \displaystyle\underset{i \in I}{\glb} \{ g_i(x) \}$
also satisfies $\forall x, y \in \Real^n$
\[
\| g(x) - g(y) \|_\infty \leq L \| x - y \|_\infty.
\]

\end{lmm}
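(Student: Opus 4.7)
The plan is to reduce the vector statement to a coordinate-wise scalar statement, and then to exploit the standard fact that the infimum of a family of $L$-Lipschitz scalar functions is itself $L$-Lipschitz. Concretely, since by definition of the infinity norm
\[
\| g(x) - g(y) \|_\infty = \max_{k \in \{1,\ldots,n\}} \bigl| [g(x)]_k - [g(y)]_k \bigr|,
\]
it suffices to show, for each fixed coordinate $k$, that
\[
\bigl| [g(x)]_k - [g(y)]_k \bigr| \leq L \| x - y \|_\infty.
\]
Recall from the discussion following~\eqref{eqn_prob_class_lin} that $\glb$ is taken component-wise, so $[g(x)]_k = \inf_{i \in I} [g_i(x)]_k$ for each $k$.

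The first step is the scalar lemma: for any two bounded families of reals $\{\alpha_i\}_{i \in I}$ and $\{\beta_i\}_{i \in I}$, one has
\[
\Bigl| \inf_{i \in I} \alpha_i - \inf_{i \in I} \beta_i \Bigr| \leq \sup_{i \in I} | \alpha_i - \beta_i |.
\]
I would prove this by the standard $\epsilon$-argument: assume without loss of generality that $\inf_i \alpha_i \geq \inf_i \beta_i$, pick $j \in I$ with $\beta_j \leq \inf_i \beta_i + \epsilon$, and estimate $\inf_i \alpha_i - \inf_i \beta_i \leq \alpha_j - \beta_j + \epsilon \leq \sup_i |\alpha_i - \beta_i| + \epsilon$, then let $\epsilon \to 0$.

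Applying this with $\alpha_i := [g_i(x)]_k$ and $\beta_i := [g_i(y)]_k$, and using the hypothesis that each $g_i$ is $L$-Lipschitz in the infinity norm, yields
\[
\bigl| [g(x)]_k - [g(y)]_k \bigr| \leq \sup_{i \in I} \bigl| [g_i(x)]_k - [g_i(y)]_k \bigr| \leq \sup_{i \in I} \| g_i(x) - g_i(y) \|_\infty \leq L \| x - y \|_\infty.
\]
Taking the maximum over $k$ completes the proof. The only delicate point is the scalar infimum inequality, which is entirely routine; the rest is just unpacking the definitions of $\glb$ and of the infinity norm. No real obstacle is anticipated, since the family is assumed to share a single Lipschitz constant $L$, so the supremum over $i$ inherits the same bound.
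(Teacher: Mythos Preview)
Your proof is correct. The paper does not actually supply its own proof of this lemma: it merely states the result and cites \cite{heinonen2005lectures}, so there is no argument to compare against, and your direct coordinate-wise reduction together with the standard scalar inequality $\bigl|\inf_i \alpha_i - \inf_i \beta_i\bigr| \leq \sup_i |\alpha_i - \beta_i|$ is a clean and complete justification.
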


The following proposition illustrates that if the infinity norm of all
matrices $A_\ell$ is lower than 1, equation~\eqref{eq:fixed_point_ref} is actually a contraction.

\begin{prpstn}
\label{prop_fix_point_1}
Assume that there exists a real constant $\gamma \in [0,1)$ such that
\begin{equation}
\label{eqn_hyp_fix_point}
\forall \ell \in \mathcal{L},\,\|A_{\ell}\|_\infty < \gamma, 
\end{equation}
then function
\begin{equation}\label{eq:fixed_point_ref2}
\bar g(x) = \underset{\ell \in \mathcal{L}}{\glb} \left\{ A_\ell x +
  b_\ell \right\} \wedge U\,.
\end{equation}
is a contraction in infinity norm, in particular, $\forall x,y \in \Real^n$,
\begin{equation}
\label{eqn_bound_contr1}
\|\bar g(x)-\bar g(y)\|_\infty \leq \gamma \|x-y\|_\infty\,.
\end{equation}
\end{prpstn}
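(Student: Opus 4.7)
My plan is to verify the Lipschitz bound componentwise-family by componentwise-family and then invoke Lemma~\ref{lemma:lipschitz}. The point is that $\bar g$ is a greatest lower bound over an augmented family: the affine maps $x \mapsto A_\ell x + b_\ell$ for $\ell \in \mathcal{L}$ together with the constant map $x \mapsto U$. If each of these is Lipschitz with constant at most $\gamma$, the lemma immediately delivers~\eqref{eqn_bound_contr1}.

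Concretely, first I would set $g_\ell(x) := A_\ell x + b_\ell$ for each $\ell \in \mathcal{L}$ and observe that
\[
\| g_\ell(x) - g_\ell(y) \|_\infty = \| A_\ell(x-y) \|_\infty \leq \|A_\ell\|_\infty \, \|x-y\|_\infty \leq \gamma \, \|x-y\|_\infty,
\]
using the definition of the induced infinity norm and hypothesis~\eqref{eqn_hyp_fix_point}. Then I would introduce the constant map $g_\star(x) := U$ as an additional member of the family; since $g_\star(x) - g_\star(y) = 0$, it trivially satisfies the same Lipschitz inequality with constant $\gamma$ (in fact with constant $0$).

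Next, I would rewrite $\bar g$ in~\eqref{eq:fixed_point_ref2} as
\[
\bar g(x) = \underset{i \in \mathcal{L} \cup \{\star\}}{\glb} \{ g_i(x) \},
\]
so that the hypotheses of Lemma~\ref{lemma:lipschitz} apply to the family $\{g_i : i \in \mathcal{L} \cup \{\star\}\}$ with common Lipschitz constant $L = \gamma$. Applying the lemma directly yields $\|\bar g(x) - \bar g(y)\|_\infty \leq \gamma \|x-y\|_\infty$, which is exactly~\eqref{eqn_bound_contr1}. Since $\gamma \in [0,1)$ by assumption, this shows $\bar g$ is a contraction in infinity norm.

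There is no real obstacle here — the result is essentially a bookkeeping consequence of the definition of the operator norm and the lemma on Lipschitz stability of the componentwise min. The only subtlety worth flagging is the presence of the term $\wedge U$, which is handled cleanly by absorbing $U$ into the family as a constant (zero-Lipschitz) function rather than trying to bound $\wedge$-operations directly.
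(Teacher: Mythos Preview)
Your proof is correct and follows essentially the same approach as the paper: show each affine map $x \mapsto A_\ell x + b_\ell$ is $\gamma$-Lipschitz via the operator norm, then invoke Lemma~\ref{lemma:lipschitz}. You are in fact slightly more careful than the paper's proof of this proposition, since you explicitly absorb the constant map $x \mapsto U$ into the family before applying the lemma, whereas the paper's short proof glosses over the $\wedge U$ term here (though it does treat it explicitly in the analogous proof of Proposition~\ref{thm_main}).
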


\begin{proof}
Note that, for any $\ell \in \mathcal{L}$, function $h(x)=A_\ell x +
b_\ell$ is a contraction, in fact, for any $x,y \in \Real^n$
\[
\|h(x)-h(y)\|_{\infty} = \|A_\ell(x-y)\|_\infty \leq \gamma \|x-y\|_\infty\,.
\]
Then, the thesis is a consequence of Lemma~\ref{lemma:lipschitz}.
\end{proof}

The following result proves that, under the same assumptions,
also~\eqref{eq:problem_intro} is a contraction. The proof is
in the appendix.

\begin{prpstn}
\label{thm_main}
Assume that~\eqref{eqn_hyp_fix_point} holds
and set
\begin{align}
\label{eqn_choice_hat}
\hat A_\ell = {P_\ell}^{-1}(A_\ell - D_\ell)\quad
\text{ and }\quad
\hat b_\ell = {P_\ell}^{-1} b_\ell,
\end{align}
with $P_\ell$ and $D_\ell$ defined as in~\eqref{eq:P_def}.
Let
\begin{equation}
\label{def:g_hat}
\hat g(x) = \underset{\ell \in \mathcal{L}}{\glb}
\{\hat A_\ell x + \hat b_\ell\} \wedge U,
\end{equation}
then $\hat g$ is a contraction in infinity norm, in particular, $\forall x,y \in \Real^n$,
\begin{equation}
\label{eqn_bound_contr2}
\|\hat g(x) - \hat g(y)\|_\infty \leq \hat \gamma \|x-y\|_\infty\,,
\end{equation}
where
\begin{equation}\label{def:hat_gamma}
\hat\gamma := \max_{\substack{\ell \in \mathcal{L} \\ i \in \mathcal{V}}}
\left\{\frac{\gamma - \left[D_\ell\right]_{ii}}{1 - \left[D_\ell\right]_{ii}}\right\}.
\end{equation}
Moreover, it holds that $\hat\gamma \leq \gamma$.
\end{prpstn}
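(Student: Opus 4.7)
The plan is to reduce the problem to bounding the operator norm $\|\hat A_\ell\|_\infty$ for each $\ell$, and then invoke Lemma~\ref{lemma:lipschitz} to handle the componentwise minimum.

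First I would observe that $\hat g(x)$ is the greatest lower bound of the family of affine maps $x \mapsto \hat A_\ell x + \hat b_\ell$ together with the constant map $x \mapsto U$. Each affine map $x \mapsto \hat A_\ell x + \hat b_\ell$ is Lipschitz in the infinity norm with constant $\|\hat A_\ell\|_\infty$, and the constant map has Lipschitz constant $0$. By Lemma~\ref{lemma:lipschitz}, if I can show that $\|\hat A_\ell\|_\infty \leq \hat\gamma$ for every $\ell \in \mathcal{L}$, then the infinity-norm Lipschitz constant of $\hat g$ is at most $\hat\gamma$, which gives~\eqref{eqn_bound_contr2}.

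Next I would compute $\|\hat A_\ell\|_\infty$ explicitly. Since $D_\ell$ is the diagonal part of $A_\ell$ and its entries satisfy $0 \leq [D_\ell]_{ii} < 1$ by the standing assumption, the matrix $P_\ell = I - D_\ell$ is diagonal and invertible, with $[P_\ell^{-1}]_{ii} = (1-[D_\ell]_{ii})^{-1}$. The matrix $A_\ell - D_\ell$ has zero diagonal and off-diagonal entries equal to those of $A_\ell$. Therefore
\[
[\hat A_\ell]_{ij} = \begin{cases} \dfrac{[A_\ell]_{ij}}{1-[D_\ell]_{ii}}, & i \neq j,\\[4pt] 0, & i = j.\end{cases}
\]
Since $A_\ell$ is nonnegative, so is $\hat A_\ell$, and
\[
\|\hat A_\ell\|_\infty = \max_{i \in \mathcal{V}} \frac{1}{1-[D_\ell]_{ii}} \sum_{j \neq i} [A_\ell]_{ij}.
\]
Using assumption~\eqref{eqn_hyp_fix_point}, which gives $\sum_j [A_\ell]_{ij} \leq \gamma$ for every row $i$, I obtain $\sum_{j\neq i}[A_\ell]_{ij} \leq \gamma - [D_\ell]_{ii}$. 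Hence $\|\hat A_\ell\|_\infty \leq \max_i \frac{\gamma - [D_\ell]_{ii}}{1 - [D_\ell]_{ii}}$, and taking the maximum over $\ell$ yields the bound by $\hat\gamma$.

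Finally I would verify the inequality $\hat\gamma \leq \gamma$. For each fixed $\ell$ and $i$, writing $d := [D_\ell]_{ii} \in [0,1)$, the claim $\frac{\gamma - d}{1 - d} \leq \gamma$ is equivalent to $\gamma - d \leq \gamma - \gamma d$, i.e., $(1-\gamma)d \geq 0$, which holds because $\gamma < 1$ and $d \geq 0$. Taking the maximum over $\ell$ and $i$ gives $\hat\gamma \leq \gamma$. I do not anticipate a serious obstacle: the only step that requires any care is confirming that the $\wedge U$ part can be absorbed through Lemma~\ref{lemma:lipschitz} (by treating $U$ as a constant function of Lipschitz constant $0 \leq \hat\gamma$), and that the row-sum bound on $A_\ell$ transfers cleanly to a row-sum bound on $\hat A_\ell$ because the diagonal entry of $A_\ell$ is removed by construction.
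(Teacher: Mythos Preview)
Your proposal is correct and follows essentially the same approach as the paper: both bound the row sums of $\hat A_\ell$ by $\hat\gamma$ (using that $\hat A_\ell$ is nonnegative with zero diagonal and that $\sum_j [A_\ell]_{ij}<\gamma$), treat $U$ as a constant map with Lipschitz constant $0$, and then apply Lemma~\ref{lemma:lipschitz}. The only cosmetic difference is that the paper proves $\hat\gamma\leq\gamma$ by noting that $d\mapsto(\gamma-d)/(1-d)$ is decreasing on $[0,\gamma]$, whereas you verify the inequality by direct algebra.
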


Hence, in case~\eqref{eqn_hyp_fix_point} is satisfied,
Problem~\eqref{eqn_prob_class_lin} can be solved by 
Algorithm~\ref{alg:fixed_point} using either $g=\bar g$
in~\eqref{eq:fixed_point_ref2} or $g=\hat
g$ in~\eqref{def:g_hat}. As we will show in Section~\ref{sec:convergence_speed_discussion},
the convergence is faster in the second case.

Algorithm~\ref{alg:eps_sol} can be applied to
Problem~\eqref{eqn_prob_class_lin}, being a subclass
of~\eqref{eqn_prob_class}. Anyway, the linear structure of
Problem~\eqref{eqn_prob_class_lin} allows for a more efficient implementation, detailed in
Algorithm~\ref{alg:consensus}.
This algorithm takes as input an initial vector $x_0 \in \Real^n$,
a tolerance $\epsilon$, matrices $A_\ell$ and vectors $b_\ell$, for
$\ell \in \mathcal{L}$, representing function $g$ and the lower bound $a$.
It operates like Algorithm~\ref{alg:eps_sol} but it optimizes the operation
performed in line~\ref{alg_change_xi_2} of Algorithm~\ref{alg:eps_sol}.
Lines from~\ref{alg3_initial_xi_begin} to~\ref{alg3_initial_xi_end}
initialize the error vector $\xi$ and they correspond to
line~\ref{alg2_initialization_end} of Algorithm~\ref{alg:eps_sol}.
Whilst, lines~\ref{alg3_xi_update_begin} from to~\ref{alg3_xi_update_end}
are the equivalent of line~\ref{alg_change_xi_2} of Algorithm~\ref{alg:eps_sol}
in which the special structure of Problem~\eqref{eqn_prob_class_lin}
is exploited in such a way that the updating of the $j$-th component
of vector $\xi$ only involves the evaluation of $L$ scalar products and $L$
scalar sums, with $L = |\mathcal{L}|$, as opposed to (up to) $nL$ scalar products
and $nL$ scalar sums of Algorithm~\ref{alg:eps_sol} applied to
Problem~\eqref{eqn_prob_class_lin}.

\begin{algorithm}[h!]
\caption{Solution algorithm for Problem~\eqref{eqn_prob_class_lin}.}
\label{alg:consensus}
\begin{algorithmic}[1]
\STATE INPUT: initial vector $x_0$, tolerance $\epsilon$,
matrices $A_\ell$, vectors $b_\ell$ for $\ell \in \mathcal{L}$,
vector $a$.
\STATE OUTPUT: vector $x$.

\STATE $x := x_0$
\STATE $Q := \varnothing$
\STATE
\FOR{all $\ell \in \mathcal{L}$} \label{alg3_initial_xi_begin}
    \STATE $\eta_\ell := A_\ell x + b_\ell$
\ENDFOR
\STATE $\xi := x - \underset{\ell \in \mathcal{L}}{\bigwedge} \eta_\ell$ \label{alg3_initial_xi_end}
\STATE
\FOR{all $i \in \mathcal{V}$}
        \IF{($\left[ \xi \right]_i > \epsilon$)}
            \STATE $Q:= \textrm{Enqueue}\left(Q, \left(i, * \right)\right)$ \label{alg3_enqueue_1}
        \ENDIF
\ENDFOR
\STATE
\WHILE{$Q \neq \varnothing$}
    \STATE $(Q, i) := \textrm{Dequeue}(Q)$
    \STATE $[x]_i = [x]_i - [\xi]_i$
    \FOR{all $j \in \mathcal{V} : i \in \mathcal{N}(j)$}
        \FOR{all $\ell \in \mathcal{L}$} \label{alg3_xi_update_begin}
            \STATE $\left[ \eta_\ell \right]_j := \left[ \eta_\ell \right]_j - \left[ A_\ell \right]_{ji} \cdot [\xi]_i$
        \ENDFOR
        \STATE $[\xi]_j := [x]_j - \displaystyle\min_{\ell \in \mathcal{L}} \left[ \eta_\ell \right]_j$ \label{alg3_xi_update_end}
        \IF {$[\xi]_j > \epsilon$}
            \STATE $Q := \textrm{Enqueue}\left(Q, \left(j, * \right)\right)$ \label{alg3_enqueue_2}
        \ENDIF
    \ENDFOR
    \STATE $[\xi]_i = 0$
\ENDWHILE
\STATE
\STATE $feasible := x \geq a$ 
\STATE
\RETURN{$x,feasible$}
\end{algorithmic}
\end{algorithm}

\section{Convergence Speed Discussion}\label{sec:convergence_speed_discussion}

In this section, we will compare the convergence speed of various
methods for solving Problem~\eqref{eqn_prob_class_lin}.
First of all, note that Problem~\eqref{eqn_prob_class_lin} can be
reformulated as the linear problem~\eqref{eqn_prob_class_proglin}.
Hence, it can be solved with any general method for linear problems. As
we will show, the performance of such methods is poor since they do
not exploit the special stucture of Problem~\eqref{eqn_prob_class_proglin}.

\subsection{Fixed point iterations}

In case hypothesis~\eqref{eqn_hyp_fix_point} is satisfied,
as discussed in Section~\ref{sec_lin_case}, 
Problem~\eqref{eqn_prob_class_lin} can be solved by 
Algorithm~\ref{alg:fixed_point} using either $g=\bar g$
in~\eqref{eq:fixed_point_ref2} or $g=\hat
g$ in~\eqref{def:g_hat}. In other words, $x^+$ can be computed with
one of the following iterations:

\begin{equation}
\label{PROB2_noprec}
\begin{cases}
x(k+1) = \bar g(x)=\underset{\ell \in \mathcal{L}}{\glb} \left\{A_\ell
  x(k) + b_\ell \right\} \wedge U\\
x(0) = x_0,
\end{cases}
\end{equation}

\begin{equation}
\label{PROB2}
\begin{cases}
x(k+1) = \hat g(x) =\underset{\ell \in \mathcal{L}}{\glb} \left\{ \hat A_\ell
  x(k) + \hat b_\ell \right\} \wedge U\\
x(0) = x_0,
\end{cases}
\end{equation}
where $x_0 \in \Real^n$ is an arbitrary initial condition and $\hat A_\ell$
and $\hat b_\ell$ are defined as in~\eqref{def:Ab_hat}.

We can compare the convergence rate of iterations~\eqref{PROB2_noprec}
and~\eqref{PROB2}.
The speed of convergence of iteration~\eqref{PROB2_noprec}
can be measured by the convergence rate:
\[
\bar \chi := \max_{\substack{x \in \Real^N \\ x \neq x^\star}} \left\{
  \frac{{\|\bar g(x)-\bar g(x^+)\|}_\infty}{{\|x - x^+\|}_\infty} \right\}.
\]
Similarly, we call $\hat \chi$ the convergence rate of iteration~\eqref{PROB2}.
Note that, by Proposition~\ref{prop_fix_point_1}, $\bar \chi
\leq \gamma$ and, by Proposition~\ref{thm_main},
$\hat \chi \leq \max_{\substack{\ell \in \mathcal{L} \\ i \in \mathcal{V}}}
\left\{\frac{\gamma - \left[D_\ell\right]_{ii}}{1 -
    \left[D_\ell\right]_{ii}}\right\} \leq \gamma$. 
Hence, in general, we have a better upper bound of the convergence
rate of iteration~\eqref{PROB2} than~\eqref{PROB2_noprec}.

Now, let us assume that matrices  $\{A_\ell\}_{\ell \in \mathcal{L}}$
are dominant diagonal, that is, there exists 
$\Delta \in \left[ 0, \frac{1}{2} \right)$ such that, $\forall i \in \{1, \ldots, n\}$,
$\forall \ell \in \mathcal{L}$,
\begin{equation}\label{dominant_diagonal}
[A_\ell]_{ii} \geq (1 - \Delta)\gamma \quad \text{and} \quad
\sum_{\substack{j = 1\\ j \neq i}}^n [A_\ell]_{ij} \leq \Delta \gamma.
\end{equation}
Recall that in the applications discussed in Section~\ref{sec:motivation}
this is attained when $h$ is small enough.
In the following theorem, whose proof is proved in the Appendix, we
state that, if $\Delta$ is small enough, iteration~\eqref{PROB2} has a
faster convergence than iteration~\eqref{PROB}.

\begin{prpstn}
\label{thm:convergence_speed}
Assume that~\eqref{eqn_hyp_fix_point} holds
and let $\Delta \in \left[ 0, \frac{1}{2} \right)$ be such that
matrices $\{A_\ell\}_{\ell \in \mathcal{L}}$ satisfy~\eqref{dominant_diagonal}.
Then, if the starting point $x_0$ is selected in such a way
that $x_0\geq x^+$, then the solutions of both~\eqref{PROB2_noprec}
and~\eqref{PROB2} satisfy $x(k) \geq x_0$, $\forall k \in
\mathbb{N}$. Moreover, if
\[
\Delta \in \left[ 0, \frac{\sqrt{1 - \gamma} - (1 - \gamma)}{\gamma} \right)\,,
\]
then for any $x \geq x^+$
\begin{equation}\label{convergence}
{\|\hat g(x) - x^+\|}_\infty < {\| \bar g(x)- x^+\|}_\infty.
\end{equation}
\end{prpstn}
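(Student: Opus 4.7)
The proof splits into two independent arguments: monotonicity of the iterations and strict improvement of the contraction rate. For the first assertion---which I read as $x(k)\geq x^+$ for all $k$, since the literal condition $x(k)\geq x_0$ cannot hold for a contracting iteration started above the fixed point---the plan is a short induction. Both $\bar g$ and $\hat g$ are monotone because the matrices $A_\ell$, $\hat A_\ell$ are nonnegative and the operations $\glb$ and $\wedge U$ preserve the componentwise order, and $x^+$ is a common fixed point of the two maps by equations~\eqref{eq:problem_intro} and~\eqref{eq:fixed_point_ref}. Hence, if $x(k)\geq x^+$ then $x(k+1)=g(x(k))\geq g(x^+)=x^+$ for either $g\in\{\bar g,\hat g\}$, and the base case $x(0)=x_0\geq x^+$ is the hypothesis.

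For the strict inequality~\eqref{convergence}, set $y:=x-x^+\geq 0$ and assume $y\neq 0$ (otherwise both sides vanish); I also restrict attention to $x\leq U$, the relevant case for the iteration after the first step. The plan is to produce matching bounds: an upper bound on $\|\hat g(x)-x^+\|_\infty$ and a lower bound on $\|\bar g(x)-x^+\|_\infty$. For the upper bound, Proposition~\ref{thm_main} gives $\|\hat g(x)-x^+\|_\infty\leq\hat\gamma\|y\|_\infty$; noting that the scalar map $d\mapsto(\gamma-d)/(1-d)$ is decreasing on $[0,1)$ and that the dominant-diagonal condition~\eqref{dominant_diagonal} yields $[D_\ell]_{ii}=[A_\ell]_{ii}\geq(1-\Delta)\gamma$, definition~\eqref{def:hat_gamma} of $\hat\gamma$ produces
\[
\|\hat g(x)-x^+\|_\infty\leq\frac{\Delta\gamma}{1-(1-\Delta)\gamma}\,\|y\|_\infty .
\]

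For the matching lower bound on $\|\bar g(x)-x^+\|_\infty$, pick $i^*$ with $[y]_{i^*}=\|y\|_\infty$ and let $\ell^{**}$ realize the $\glb$ in the definition of $[\bar g(x)]_{i^*}$. If the cap $U_{i^*}$ is not active at $[\bar g(x)]_{i^*}$, then $[x^+]_{i^*}\leq(A_{\ell^{**}}x^+ + b_{\ell^{**}})_{i^*}$, from the fixed-point identity for $x^+$, gives
\[
[\bar g(x)]_{i^*}-[x^+]_{i^*}\geq[A_{\ell^{**}}y]_{i^*}\geq[A_{\ell^{**}}]_{i^*i^*}[y]_{i^*}\geq(1-\Delta)\gamma\|y\|_\infty .
\]
If the cap is active, so $[\bar g(x)]_{i^*}=U_{i^*}$, then $[y]_{i^*}\leq U_{i^*}-[x^+]_{i^*}$ (valid since $x\leq U$) combined with $(1-\Delta)\gamma<1$ yields the same bound; the sub-case $[x^+]_{i^*}=U_{i^*}$ would force $[y]_{i^*}=0$ and is therefore excluded.

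Combining, the strict inequality~\eqref{convergence} reduces to the scalar inequality $\Delta\gamma/(1-(1-\Delta)\gamma)<(1-\Delta)\gamma$. Clearing denominators, this becomes the quadratic $\gamma\Delta^2+2(1-\gamma)\Delta-(1-\gamma)<0$, whose discriminant equals $4(1-\gamma)$ and whose positive root is precisely $(\sqrt{1-\gamma}-(1-\gamma))/\gamma$, matching exactly the hypothesized range for $\Delta$. The main obstacle I anticipate lies in the lower-bound step: one must pick the correct active constraint $\ell^{**}$ at the maximal coordinate of $y$, invoke the right fixed-point identity for $x^+$, and dispose of the cap-active edge case cleanly, rather than merely comparing global contraction constants.
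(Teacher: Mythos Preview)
Your proposal is correct and follows essentially the same route as the paper: induction via monotonicity for the first claim (you rightly read the intended conclusion as $x(k)\geq x^+$, which is exactly what the paper proves), a diagonal-dominance lower bound $\|\bar g(x)-x^+\|_\infty\geq(1-\Delta)\gamma\|x-x^+\|_\infty$, the $\hat\gamma$-upper bound from Proposition~\ref{thm_main}, and the same quadratic in $\Delta$ (the paper's version is just yours multiplied through by $\gamma$). If anything, you are slightly more careful than the paper in treating the cap $\wedge U$ and in making the step $\hat\gamma\leq\Delta\gamma/(1-(1-\Delta)\gamma)$ explicit; the paper handles the lower bound by assembling a single matrix $\bar A$ row-by-row rather than working at the maximal coordinate, but the content is the same.
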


\subsection{Speed of Algorithm~\ref{alg:consensus} and priority queue policy}
\label{section:priority_queue}

As we will see in the numerical
experiments section, Algorithm~\ref{alg:consensus} solves
Problem~\eqref{eqn_prob_class_lin} more efficiently
than iterations~\eqref{PROB2_noprec} and~\eqref{PROB2}.

As we already mentioned in the previous section, the order in which we
update the values of the nodes in the priority queue does not affect the
convergence of the algorithm but impacts heavily on its convergence speed.
We implemented four different queue policies, detailed in the following.

\subsubsection{Node variation}
The priority associated to an index $i$ is given by
the opposite of the absolute value of the variation of $[x]_i$ in its last update.
In this case, in lines~\ref{alg2_enqueue_1},~\ref{alg2_enqueue_2} of
Algorithm~\ref{alg:eps_sol} and lines~\ref{alg3_enqueue_1},~\ref{alg3_enqueue_2}
of Algorithm~\ref{alg:consensus}, symbol $*$ is replaced by the opposite of
the corresponding component of $\xi$ of the node added to the queue
(see Table~\ref{tab:code}).
This can be considered a ``greedy'' policy, in fact we update first the
components of the solution $[x]_i$ associated to
a larger variation $[\xi]_i$, in order to have a faster
convergence of the current solution $x$ to $x^+$.

\subsubsection{Node values}
The priority associated to an index $i$ in the priority queue is given by $[x]_i$.
In this case, in lines~\ref{alg2_enqueue_1},~\ref{alg2_enqueue_2} of
Algorithm~\ref{alg:eps_sol} and lines~\ref{alg3_enqueue_1},~\ref{alg3_enqueue_2}
of Algorithm~\ref{alg:consensus}, symbol $*$ is replaced by the opposite of the value
of the node added to the queue (see Table~\ref{tab:code}).
The rationale of this policy is the observation that, in Problem~\eqref{eqn_prob_class_proglin}, components of $x$
with lower values are more likely to appear in active constraints.
This policy mimics Dijkstra's
algorithm, in fact the indexes associated to the solution components
with lower values are processed first.

\subsubsection{FIFO e LIFO policies}
The two remaining policies implement respectively the First In First Out (FIFO)
policy, (i.e., a stack) and the Last In First Out (LIFO) policy (i.e.,
a queue). Namely, in case of FIFO, the nodes are updated in the order in which
they are inserted in the queue. In case of LIFO, they are updated in
reverse order.

In order to formally implement these two policies in a priority queue, we need to introduce a counter $k$ initialized to $0$ and
incremented every time a node is added to the priority queue.
In lines~\ref{alg2_enqueue_1},~\ref{alg2_enqueue_2} of Algorithm~\ref{alg:eps_sol}
and lines~\ref{alg3_enqueue_1},~\ref{alg3_enqueue_2} of Algorithm~\ref{alg:consensus},
symbol $*$ is replaced by $k$ in case we want to implement a LIFO policy and by
$-k$ for implementing a FIFO policy (see Table~\ref{tab:code}).
These steps are required to formally represent these two policies in
Algorithm~\ref{alg:consensus}. As said, these two policies can be more
simply implemened with an unordered queue (for FIFO policy) or a stack (for LIFO
policy). The rationale of this two policies is to avoid the overhead
of managing a priority queue. In fact, inserting an entry into a
priority queue of $n$ elements has a time-cost of $O(\log
n)$, while the same operation on an unordered queue or a stack has a
cost of $O(1)$. Note that, with these policies, we increase the efficiency in the management of the set
of the indexes that have to be updated at the expense of a possible
less efficient update policy.

\begin{table}[ht]
\centering
\begin{tabular}{|c|l|l|}
\hline
Policy &
Alg.\ref{alg:eps_sol} line~\ref{alg2_enqueue_1},
Alg.\ref{alg:consensus} line~\ref{alg3_enqueue_1} &
Alg.\ref{alg:eps_sol} line~\ref{alg2_enqueue_2},
Alg.\ref{alg:consensus} line~\ref{alg3_enqueue_2} \\
\hline
Variation & $Q:= \textrm{Enqueue}\left(Q, \left(i, -[\xi]_i \right)\right)$ &
$Q:= \textrm{Enqueue}\left(Q, \left(j, -[\xi]_j \right)\right)$ \\
\hline
Value & $Q:= \textrm{Enqueue}\left(Q, \left(i, [x]_i \right)\right)$ &
$Q:= \textrm{Enqueue}\left(Q, \left(j, [x]_j \right)\right)$ \\
\hline
FIFO & $Q:= \textrm{Enqueue}\left(Q, \left(i, k \right)\right)$; $k := k + 1$ &
$Q:= \textrm{Enqueue}\left(Q, \left(j, k \right)\right)$; $k := k + 1$ \\
\hline
LIFO & $Q:= \textrm{Enqueue}\left(Q, \left(i, -k \right)\right)$; $k := k + 1$ &
$Q:= \textrm{Enqueue}\left(Q, \left(j, -k \right)\right)$; $k := k + 1$ \\
\hline
\end{tabular}
\caption{Possible priority queue policies.}
\label{tab:code}
\end{table}

\subsection{Numerical Experiments}\label{subsec:simulations}

In this section, we test Algorithm~\ref{alg:consensus} on randomly
generated problems of class~\eqref{eqn_prob_class_lin}.
We carried out two sets of tests. In the first one, we compared the solution time
of Algorithm~\ref{alg:consensus} with different priority queue
policies with a commercial solver for linear problems (Gurobi).
In the second class of tests, we compared the number of scalar
multiplications executed by Algorithm~\ref{alg:consensus} (with different
priority queue policies) with the ones
required by the fixed point iteration~\eqref{PROB2_noprec}.

\subsubsection{Random problems generation}
The following procedure allows generating a random problem of
class~\eqref{eqn_prob_class_lin} with $n$ variables. The procedure
takes the following input parameters:

\begin{itemize}
\item $U \in \Real^+$: an upper bound for the problem solution,
\item $M_A \in \Real^+$: maximum value for entries of $A_1,\ldots,A_L$,
\item $M_b \in \Real^+$: maximum value for entries of $b_1,\ldots,b_L$,
\item $G_1,\ldots, G_L$: graphs with $n$ nodes.
\end{itemize} 

A problem of class~\eqref{eqn_prob_class_lin} is then obtained with
the following operations, for $i=1,\ldots,L$:

\begin{itemize}
\item Set $D_i$ as the adiacency matrix of graph $G_i$,
\item define $A_i$ as the matrix obtained from $D_i$ by replacing each
  nonzero entry of $D_i$ with a random number generated from
  a uniform distribution in interval $[0,M_A]$,
\item define $b_i \in \Real^n$ so that each entry is a random number generated from a uniform
  distribution in interval $[0,M_b]$.
\end{itemize}

Graphs $G_1,\ldots,G_L$ are obtained from standard classes of random
graphs, namely:
\begin{itemize}
\item the Barab\'asi-Albert model~\cite{BARABASI1999}, characterized
by a scale-free degree distribution,
\item the Newman-Watts-Strogatz model~\cite{NEWMAN1999}, that originates
  graphs with small-world properties,
\item  the Holm and Kim algorithm~\cite{HOLME2002}, that produces
scale-free graphs with high clustering.
\end{itemize}
In our tests, we used the software NetworkX~\cite{NetworkX} to
generate the random graphs.

\subsubsection{Test 1: solution time}
We considered random instances of Problem~\eqref{eqn_prob_class_lin}
obtained with the following parameters: $U = 10^{5}$, $M_A = 0.5$,
$M_b = 1$, $L = 4$, using random graphs with a varying  number of nodes
obtained with the following models.
\begin{itemize}
\item  The Barab\'asi-Albert model (see~\cite{BARABASI1999} for more
  details), in which each new node is connected to
5 existing nodes.
\item The Watts-Strogatz model (see~\cite{NEWMAN1999}), in which each node is connected to its 2 nearest
neighbors and with shortcuts created with a probability of 3 divided by the number
of nodes in the graph.
\item The Holm and Kim algorithm (see~\cite{HOLME2002}), in which 4 random edges are added for each
new node and with a probability of $0.25$ of adding an extra random
edge generating a triangle.
\end{itemize}

Figures~\ref{fig:barabasi_albert_graph_time},~\ref{fig:newman_watts_strogatz_graph_time}
and~\ref{fig:powerlaw_cluster_graph_time} compare the solution times obtained
with Algoritm~\ref{alg:consensus} (using different queue policies) to those obtained
with Gurobi. The figures refer to random graphs generated with Barab\'asi-Albert model, Watts-Strogatz model
and Holm and Kim algorithm, respectively. For each figure, the
horizontal axis
represents the number
of variables (that are logarithmically spaced) and the vertical-axis represents
the solution times (also logarithmically spaced), obtained as the average of 5 tests.
For each graph type, the policies based on FIFO and node variation
appear to be the best performing ones. In particular,  for problems obtained from the
Barabasi-Albert model (Figure~\ref{fig:barabasi_albert_graph_time})
and Holm and Kim algorithm
(Figure~\ref{fig:powerlaw_cluster_graph_time}), the solution
time obtained with these two policies are more than three orders of
magnitude lower than Gurobi. Moreover, the solution
time with FIFO policy is more than one order
of magniture lower than Gurobi for problems obtained from
Watts-Strogatz model (Figure~\ref{fig:newman_watts_strogatz_graph_time}).
Note that, in every figure, Gurobi solution times are almost constant for
small numbers of variables. A possible explanation could be that Gurobi performs
some dimension-independent operations which, at small dimensions, are the most
time-consuming ones.
Note also that, in Figures~\ref{fig:barabasi_albert_graph_time} 
and~\ref{fig:powerlaw_cluster_graph_time}, the solution times for node
value and LIFO policies are missing starting from a certain number of
variables. This is due to
excessively high computational times, however, the first collected data points are enough
for drawing conclusions on the performances of these policies which, as the number of
variables grows, perform far worse than Gurobi.

\noindent
\begin{tikzpicture}

\begin{axis}[%
width=.8\textwidth,
height=.5\textwidth,
at={(1.011in,0.642in)},
scale only axis,
xmode=log,
xmin=10,
xmax=11144,
xminorticks=true,
xlabel style={font=\color{white!15!black}},
xlabel={Number of variables},
ymode=log,
ymin=0.0001,
ymax=1000,
yminorticks=true,
ylabel style={font=\color{white!15!black}},
ylabel={Solution time},
axis background/.style={fill=white},
xmajorgrids,
xminorgrids,
ymajorgrids,
yminorgrids,
legend style={at={(0.99,0.01)}, anchor=south east, legend cell align=left, align=left, draw=white!15!black}
]
\addplot [color=mycolor1, line width=1.0pt, mark=asterisk, mark options={solid, mycolor1}]
  table[row sep=crcr]{%
10	0.0005951548\\
13	0.00035344\\
17	0.000499044\\
21	0.0005792682\\
27	0.000787925\\
35	0.0092325724\\
45	0.005667969\\
58	0.0017644296\\
74	0.0022005946\\
95	0.0032154846\\
123	0.0055618398\\
157	0.0070056918\\
202	0.012333153\\
260	0.0159228652\\
334	0.0156618468\\
429	0.0265329208\\
551	0.035822053\\
708	0.0551215674\\
910	0.0573981438\\
1169	0.0684891284\\
1501	0.084946479\\
1929	0.134060193\\
2478	0.1310034238\\
3184	0.169810344\\
4090	0.209420571\\
5255	0.2568849752\\
6752	0.3660565208\\
8674	0.4712324662\\
11144	0.5907037372\\
14318	0\\
18395	0\\
23634	0\\
30364	0\\
39010	0\\
50119	0\\
};
\addlegendentry{Variation}

\addplot [color=mycolor2, line width=1.0pt, mark=o, mark options={solid, mycolor2}]
  table[row sep=crcr]{%
10	0.0006611324\\
13	0.000167685\\
17	0.0002435054\\
21	0.000359978\\
27	0.0004558798\\
35	0.0015596832\\
45	0.0013026902\\
58	0.0015923736\\
74	0.0026346076\\
95	0.0029254636\\
123	0.0030095482\\
157	0.0045819746\\
202	0.0044292348\\
260	0.0150607736\\
334	0.0079751382\\
429	0.014015092\\
551	0.0198774706\\
708	0.0352434538\\
910	0.0331614908\\
1169	0.0521926136\\
1501	0.0583760694\\
1929	0.1083283926\\
2478	0.0887241906\\
3184	0.1045216054\\
4090	0.123326712\\
5255	0.1338982558\\
6752	0.1638463362\\
8674	0.2036605504\\
11144	0.2729580392\\
14318	0\\
18395	0\\
23634	0\\
30364	0\\
39010	0\\
50119	0\\
};
\addlegendentry{FIFO}

\addplot [color=mycolor3, line width=1.0pt, mark=square, mark options={solid, mycolor3}]
  table[row sep=crcr]{%
10	0.00314563\\
13	0.0122389824\\
17	0.028481783\\
21	0.0540521318\\
27	0.1000488006\\
35	0.386978246\\
45	0.8433000552\\
58	1.3572409162\\
74	3.6587333724\\
95	0\\
123	0\\
157	0\\
202	0\\
260	0\\
334	0\\
429	0\\
551	0\\
708	0\\
910	0\\
1169	0\\
1501	0\\
1929	0\\
2478	0\\
3184	0\\
4090	0\\
5255	0\\
6752	0\\
8674	0\\
11144	0\\
14318	0\\
18395	0\\
23634	0\\
30364	0\\
39010	0\\
50119	0\\
};
\addlegendentry{Value}

\addplot [color=mycolor4, line width=1.0pt, mark=pentagon, mark options={solid, mycolor4}]
  table[row sep=crcr]{%
10	0.022491162\\
13	0.0735789134\\
17	0.176859916\\
21	0.463932564\\
27	0.8965218696\\
35	3.0192616228\\
45	0\\
58	0\\
74	0\\
95	0\\
123	0\\
157	0\\
202	0\\
260	0\\
334	0\\
429	0\\
551	0\\
708	0\\
910	0\\
1169	0\\
1501	0\\
1929	0\\
2478	0\\
3184	0\\
4090	0\\
5255	0\\
6752	0\\
8674	0\\
11144	0\\
14318	0\\
18395	0\\
23634	0\\
30364	0\\
39010	0\\
50119	0\\
};
\addlegendentry{LIFO}

\addplot [color=mycolor5, line width=1.0pt, mark=diamond, mark options={solid, mycolor5}]
  table[row sep=crcr]{%
10	0.3459368862\\
13	0.266633962\\
17	0.264651968\\
21	0.2609900964\\
27	0.269805837\\
35	0.2952675002\\
45	0.3570352102\\
58	0.3352853552\\
74	0.3410980454\\
95	0.3438401718\\
123	0.2922790924\\
157	0.3539103666\\
202	0.3833662614\\
260	0.3534752564\\
334	0.3928540948\\
429	0.5392693906\\
551	0.4639342778\\
708	1.1983676908\\
910	1.8217313006\\
1169	2.0217317532\\
1501	3.0225821664\\
1929	5.942938245\\
2478	8.7950893672\\
3184	16.3822081754\\
4090	28.8719912166\\
5255	62.1615212322\\
6752	126.0592345666\\
8674	250.490089081\\
11144	509.2188303676\\
14318	0\\
18395	0\\
23634	0\\
30364	0\\
39010	0\\
50119	0\\
};
\addlegendentry{Gurobi}

\end{axis}
\end{tikzpicture}

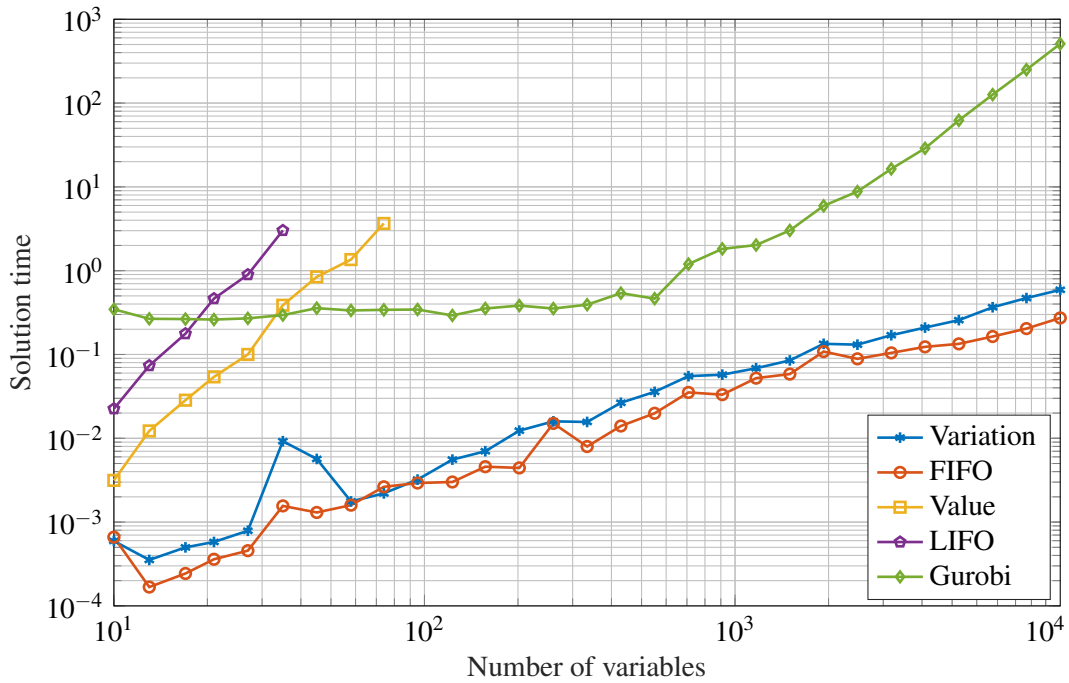
\captionof{figure}{Solution times for graphs with growing number of nodes generated with Barab\'asi-Albert model.}
\label{fig:barabasi_albert_graph_time}

\begin{tikzpicture}

\begin{axis}[%
width=.8\textwidth,
height=.5\textwidth,
at={(1.011in,0.642in)},
scale only axis,
xmode=log,
xmin=10,
xmax=50119,
xminorticks=true,
xlabel style={font=\color{white!15!black}},
xlabel={Number of variables},
ymode=log,
ymin=0.0001,
ymax=100,
yminorticks=true,
ylabel style={font=\color{white!15!black}},
ylabel={Solution time},
axis background/.style={fill=white},
xmajorgrids,
xminorgrids,
ymajorgrids,
yminorgrids,
legend style={at={(0.99,0.01)}, anchor=south east, legend cell align=left, align=left, draw=white!15!black}
]
\addplot [color=mycolor1, line width=1.0pt, mark=asterisk, mark options={solid, mycolor1}]
  table[row sep=crcr]{%
10	0.0437058822\\
13	0.0003105642\\
17	0.0003349432\\
21	0.0003880072\\
27	0.0004336068\\
35	0.0005247948\\
45	0.000590372\\
58	0.0007067506\\
74	0.0008802644\\
95	0.0010568416\\
123	0.001462452\\
157	0.0018199536\\
202	0.0022388226\\
260	0.0038207278\\
334	0.0056529644\\
429	0.0071710784\\
551	0.0108080108\\
708	0.0162415596\\
910	0.020296989\\
1169	0.0286340496\\
1501	0.036942233\\
1929	0.0423397638\\
2478	0.056434152\\
3184	0.0654424698\\
4090	0.121155158\\
5255	0.120423036\\
6752	0.1400534612\\
8674	0.178852733\\
11144	0.2649161826\\
14318	0.4155184038\\
18395	0.562368231\\
23634	0.6613739906\\
30364	0.9988348148\\
39010	1.2374534102\\
50119	1.631090228\\
};
\addlegendentry{Variation}

\addplot [color=mycolor2, line width=1.0pt, mark=o, mark options={solid, mycolor2}]
  table[row sep=crcr]{%
10	0.000501501\\
13	0.0001129486\\
17	0.0001205568\\
21	0.0001454514\\
27	0.0001625716\\
35	0.0001890596\\
45	0.0002200992\\
58	0.0002302208\\
74	0.0002439548\\
95	0.0005181754\\
123	0.0005229488\\
157	0.0007790132\\
202	0.0013731628\\
260	0.0023038752\\
334	0.0022321066\\
429	0.0025945732\\
551	0.0024802142\\
708	0.0038862076\\
910	0.005670732\\
1169	0.0081429976\\
1501	0.008239282\\
1929	0.009971903\\
2478	0.0122597952\\
3184	0.0170984232\\
4090	0.0358771618\\
5255	0.04051714\\
6752	0.0593637738\\
8674	0.0605770912\\
11144	0.0714349828\\
14318	0.1042273148\\
18395	0.1109716336\\
23634	0.1069118182\\
30364	0.1207863376\\
39010	0.143382946\\
50119	0.1822079336\\
};
\addlegendentry{FIFO}

\addplot [color=mycolor3, line width=1.0pt, mark=square, mark options={solid, mycolor3}]
  table[row sep=crcr]{%
10	0.0030631324\\
13	0.003665062\\
17	0.005082224\\
21	0.0077198636\\
27	0.0100503912\\
35	0.0132994676\\
45	0.0181489316\\
58	0.0252437954\\
74	0.0237352274\\
95	0.0302054782\\
123	0.035608264\\
157	0.052395607\\
202	0.0653032904\\
260	0.0807155248\\
334	0.1048264554\\
429	0.1261190088\\
551	0.1544885632\\
708	0.2104607114\\
910	0.2499445\\
1169	0.3309349264\\
1501	0.4111294766\\
1929	0.546758185\\
2478	0.6694439334\\
3184	0.8522266348\\
4090	1.4145447466\\
5255	1.4698170538\\
6752	1.8312934464\\
8674	2.3711939058\\
11144	3.1252997074\\
14318	4.283921471\\
18395	5.329291155\\
23634	6.524120078\\
30364	9.2146832476\\
39010	12.3157065618\\
50119	13.9710775174\\
};
\addlegendentry{Value}

\addplot [color=mycolor4, line width=1.0pt, mark=pentagon, mark options={solid, mycolor4}]
  table[row sep=crcr]{%
10	0.0018031918\\
13	0.0019955598\\
17	0.0021590202\\
21	0.0026767832\\
27	0.0034084286\\
35	0.0038898614\\
45	0.0041752668\\
58	0.0079182316\\
74	0.007292178\\
95	0.0113519546\\
123	0.0138527408\\
157	0.0188655924\\
202	0.0216657636\\
260	0.0264216568\\
334	0.0332667152\\
429	0.0407806204\\
551	0.0484925108\\
708	0.0602456228\\
910	0.0744995958\\
1169	0.091407883\\
1501	0.1183014852\\
1929	0.1474717314\\
2478	0.188861047\\
3184	0.240157985\\
4090	0.4588354656\\
5255	0.4368420284\\
6752	0.51562392\\
8674	0.6565395882\\
11144	0.9455767692\\
14318	1.1497900564\\
18395	1.4300264812\\
23634	1.8082655934\\
30364	2.8185721652\\
39010	3.4079966926\\
50119	3.8371527132\\
};
\addlegendentry{LIFO}

\addplot [color=mycolor5, line width=1.0pt, mark=diamond, mark options={solid, mycolor5}]
  table[row sep=crcr]{%
10	0.323342488\\
13	0.3281140364\\
17	0.3381316846\\
21	0.3451169632\\
27	0.3367729224\\
35	0.3357352846\\
45	0.324017452\\
58	0.3408094084\\
74	0.3164435598\\
95	0.3062755636\\
123	0.245843998\\
157	0.2501585924\\
202	0.2417714116\\
260	0.2656039764\\
334	0.2419686076\\
429	0.2642374488\\
551	0.2663696784\\
708	0.2597513194\\
910	0.2750517558\\
1169	0.27871696\\
1501	0.287340832\\
1929	0.3033246612\\
2478	0.5301388348\\
3184	0.620885309\\
4090	0.9149669274\\
5255	0.9121104986\\
6752	1.1056631104\\
8674	1.2273643368\\
11144	1.6046847444\\
14318	1.6613798818\\
18395	1.5180824458\\
23634	1.9858505602\\
30364	2.5996911352\\
39010	3.2992850724\\
50119	3.3848775542\\
};
\addlegendentry{Gurobi}

\end{axis}
\end{tikzpicture}

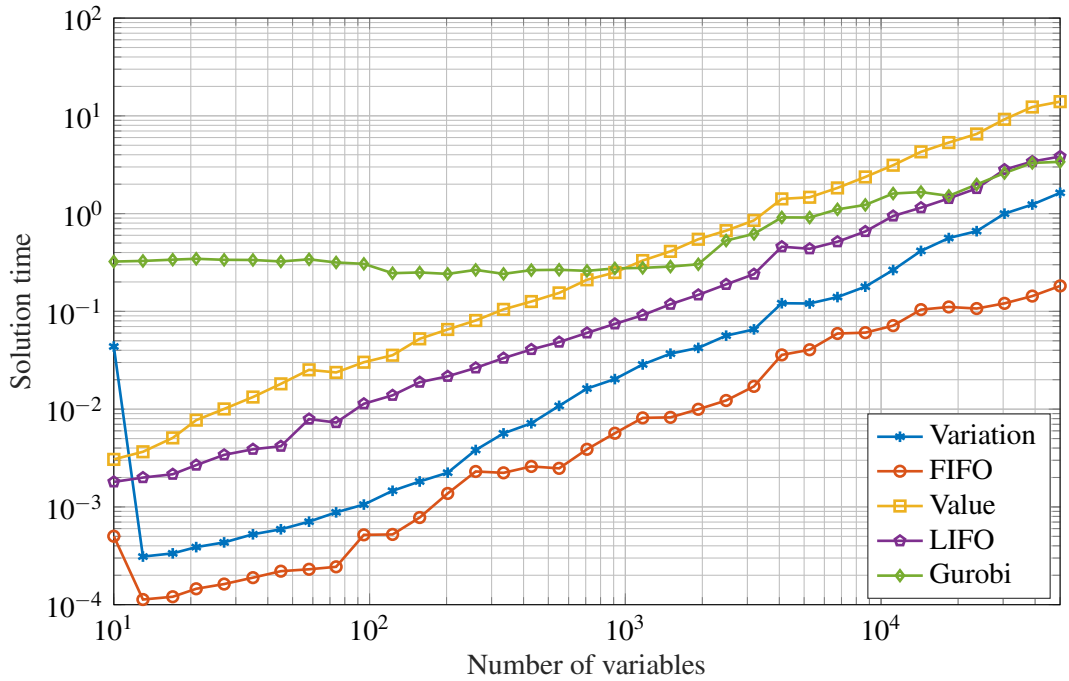
\captionof{figure}{Solution times for graphs with growing number of nodes generated with Newman-Watts-Strogatz model.}
\label{fig:newman_watts_strogatz_graph_time}

\begin{tikzpicture}

\begin{axis}[%
width=.8\textwidth,
height=.5\textwidth,
at={(1.011in,0.642in)},
scale only axis,
xmode=log,
xmin=10,
xmax=11144,
xminorticks=true,
xlabel style={font=\color{white!15!black}},
xlabel={Number of variables},
ymode=log,
ymin=0.0001,
ymax=1000,
yminorticks=true,
ylabel style={font=\color{white!15!black}},
ylabel={Solution time},
axis background/.style={fill=white},
xmajorgrids,
xminorgrids,
ymajorgrids,
yminorgrids,
legend style={at={(0.99,0.01)}, anchor=south east, legend cell align=left, align=left, draw=white!15!black}
]
\addplot [color=mycolor1, line width=1.0pt, mark=asterisk, mark options={solid, mycolor1}]
  table[row sep=crcr]{%
10	0.0536987564\\
13	0.0010019214\\
17	0.0004300098\\
21	0.0005706234\\
27	0.0007647136\\
35	0.00107908\\
45	0.0013368298\\
58	0.0022689742\\
74	0.0028972518\\
95	0.0032404698\\
123	0.0053247938\\
157	0.0061977208\\
202	0.008244831\\
260	0.0157692264\\
334	0.0162276656\\
429	0.0268331206\\
551	0.037888087\\
708	0.0580116484\\
910	0.0770917402\\
1169	0.0743282476\\
1501	0.0720739864\\
1929	0.1072650794\\
2478	0.1465275914\\
3184	0.1879428798\\
4090	0.2150498624\\
5255	0.2758929592\\
6752	0.341359006\\
8674	0.5186461124\\
11144	0.7728565042\\
14318	0\\
18395	0\\
23634	0\\
30364	0\\
39010	0\\
50119	0\\
};
\addlegendentry{Variation}

\addplot [color=mycolor2, line width=1.0pt, mark=o, mark options={solid, mycolor2}]
  table[row sep=crcr]{%
10	0.0029447472\\
13	0.0001863554\\
17	0.0003917964\\
21	0.000379023\\
27	0.0005147618\\
35	0.0005317338\\
45	0.0008674904\\
58	0.0031011384\\
74	0.0018998266\\
95	0.0038669768\\
123	0.0025136482\\
157	0.0040747384\\
202	0.0052481556\\
260	0.0089187658\\
334	0.0120926002\\
429	0.0198741096\\
551	0.0223323406\\
708	0.0369560926\\
910	0.0553750908\\
1169	0.0413713452\\
1501	0.0470457412\\
1929	0.0830445404\\
2478	0.0968584854\\
3184	0.1115110838\\
4090	0.1177151406\\
5255	0.1228187552\\
6752	0.1477737006\\
8674	0.2115624136\\
11144	0.2633009292\\
14318	0\\
18395	0\\
23634	0\\
30364	0\\
39010	0\\
50119	0\\
};
\addlegendentry{FIFO}

\addplot [color=mycolor3, line width=1.0pt, mark=square, mark options={solid, mycolor3}]
  table[row sep=crcr]{%
10	0.0055850004\\
13	0.0115391894\\
17	0.0251107596\\
21	0.0578950462\\
27	0.1027943962\\
35	0.2856070582\\
45	0.4858989502\\
58	1.0325888364\\
74	2.3098624084\\
95	4.2030332666\\
123	0\\
157	0\\
202	0\\
260	0\\
334	0\\
429	0\\
551	0\\
708	0\\
910	0\\
1169	0\\
1501	0\\
1929	0\\
2478	0\\
3184	0\\
4090	0\\
5255	0\\
6752	0\\
8674	0\\
11144	0\\
14318	0\\
18395	0\\
23634	0\\
30364	0\\
39010	0\\
50119	0\\
};
\addlegendentry{Value}

\addplot [color=mycolor4, line width=1.0pt, mark=pentagon, mark options={solid, mycolor4}]
  table[row sep=crcr]{%
10	0.0220423906\\
13	0.0454583432\\
17	0.1392907014\\
21	0.2452004284\\
27	0.6623545588\\
35	2.001916001\\
45	3.6018202132\\
58	0\\
74	0\\
95	0\\
123	0\\
157	0\\
202	0\\
260	0\\
334	0\\
429	0\\
551	0\\
708	0\\
910	0\\
1169	0\\
1501	0\\
1929	0\\
2478	0\\
3184	0\\
4090	0\\
5255	0\\
6752	0\\
8674	0\\
11144	0\\
14318	0\\
18395	0\\
23634	0\\
30364	0\\
39010	0\\
50119	0\\
};
\addlegendentry{LIFO}

\addplot [color=mycolor5, line width=1.0pt, mark=diamond, mark options={solid, mycolor5}]
  table[row sep=crcr]{%
10	0.3474432906\\
13	0.2888704704\\
17	0.3037165692\\
21	0.299908297\\
27	0.3093113462\\
35	0.3001741816\\
45	0.3209836864\\
58	0.3036790408\\
74	0.3182152444\\
95	0.3046952044\\
123	0.3147784406\\
157	0.3247152874\\
202	0.3111099764\\
260	0.3381673284\\
334	0.3644460596\\
429	0.4187763042\\
551	0.4201275096\\
708	0.5188806938\\
910	1.699291754\\
1169	2.2449774678\\
1501	3.0619865764\\
1929	4.6667603954\\
2478	7.5373405438\\
3184	12.3516520966\\
4090	22.143164653\\
5255	43.2589357954\\
6752	87.53416575\\
8674	183.744954342\\
11144	393.8327459402\\
14318	0\\
18395	0\\
23634	0\\
30364	0\\
39010	0\\
50119	0\\
};
\addlegendentry{Gurobi}

\end{axis}
\end{tikzpicture}

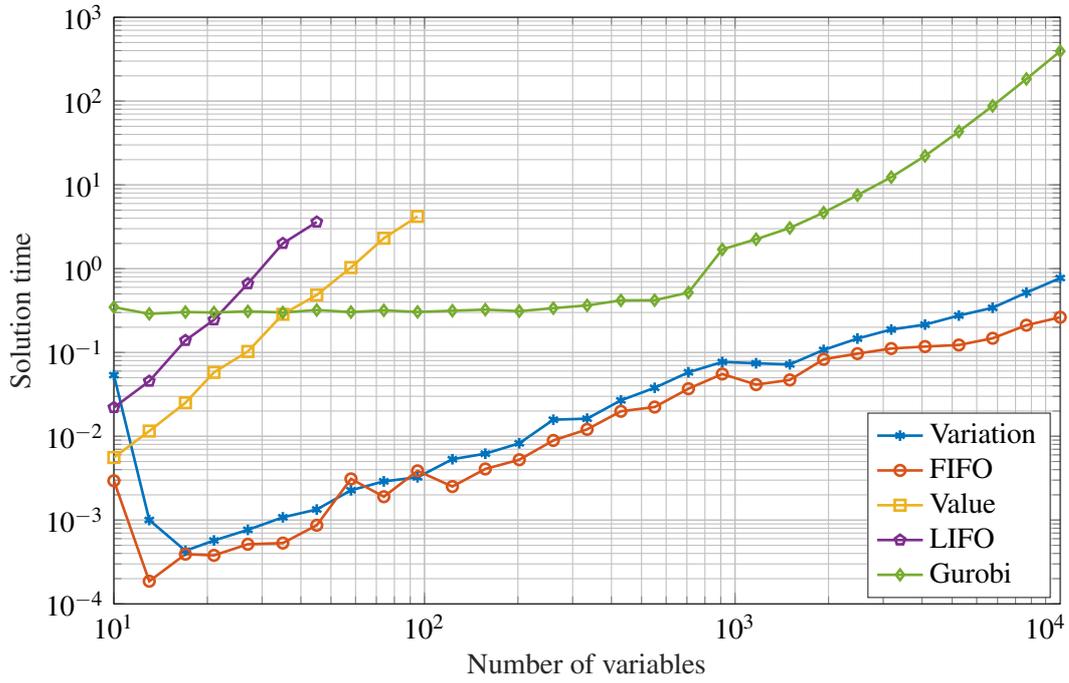
\captionof{figure}{Solution times for graphs with growing number of nodes generated with Holm and Kim algorithm.}
\label{fig:powerlaw_cluster_graph_time}

\begin{tikzpicture}

\begin{axis}[%
width=.8\textwidth,
height=.5\textwidth,
at={(1.011in,0.642in)},
scale only axis,
x dir=reverse,
xmode=log,
xmin=1e-10,
xmax=0.1,
xminorticks=true,
xlabel style={font=\color{white!15!black}},
xlabel={Tolerance},
ymode=log,
ymin=10000,
ymax=10000000,
yminorticks=true,
ylabel style={font=\color{white!15!black}},
ylabel={Multiplications},
axis background/.style={fill=white},
xmajorgrids,
xminorgrids,
ymajorgrids,
yminorgrids,
legend style={at={(0.99,0.2)}, anchor=south east, legend cell align=left, align=left, draw=white!15!black}
]
\addplot [color=mycolor1, line width=1.0pt, mark=asterisk, mark options={solid, mycolor1}]
  table[row sep=crcr]{%
0.1	24832\\
0.01	25733\\
0.001	26606.2\\
0.0001	27500.6\\
1e-05	28388\\
1e-06	29292.2\\
1e-07	30182.2\\
1e-08	31059.6\\
1e-09	31964.8\\
1e-10	32856.4\\
};
\addlegendentry{Variation}

\addplot [color=mycolor2, line width=1.0pt, mark=o, mark options={solid, mycolor2}]
  table[row sep=crcr]{%
0.1	25092\\
0.01	26076.6\\
0.001	27074.2\\
0.0001	28079.6\\
1e-05	29083.8\\
1e-06	30074.6\\
1e-07	31087\\
1e-08	32087.8\\
1e-09	33084.4\\
1e-10	34086\\
};
\addlegendentry{FIFO}

\addplot [color=mycolor3, line width=1.0pt, mark=square, mark options={solid, mycolor3}]
  table[row sep=crcr]{%
0.1	1245061.2\\
0.01	4310056\\
0.001	0\\
0.0001	0\\
1e-05	0\\
1e-06	0\\
1e-07	0\\
1e-08	0\\
1e-09	0\\
1e-10	0\\
};
\addlegendentry{Value}

\addplot [color=mycolor4, line width=1.0pt, mark=pentagon, mark options={solid, mycolor4}]
  table[row sep=crcr]{%
0.1	50777.6\\
0.01	70298.8\\
0.001	99497\\
0.0001	140810.2\\
1e-05	216532\\
1e-06	359016.6\\
1e-07	584156.2\\
1e-08	1016941.2\\
1e-09	1989840.2\\
1e-10	3891423.4\\
};
\addlegendentry{LIFO}

\addplot [color=mycolor5, line width=1.0pt, mark=diamond, mark options={solid, mycolor5}]
  table[row sep=crcr]{%
0.1	396000\\
0.01	475200\\
0.001	534600\\
0.0001	594000\\
1e-05	673200\\
1e-06	732600\\
1e-07	792000\\
1e-08	871200\\
1e-09	930600\\
1e-10	990000\\
};
\addlegendentry{Fixed point}

\end{axis}
\end{tikzpicture}

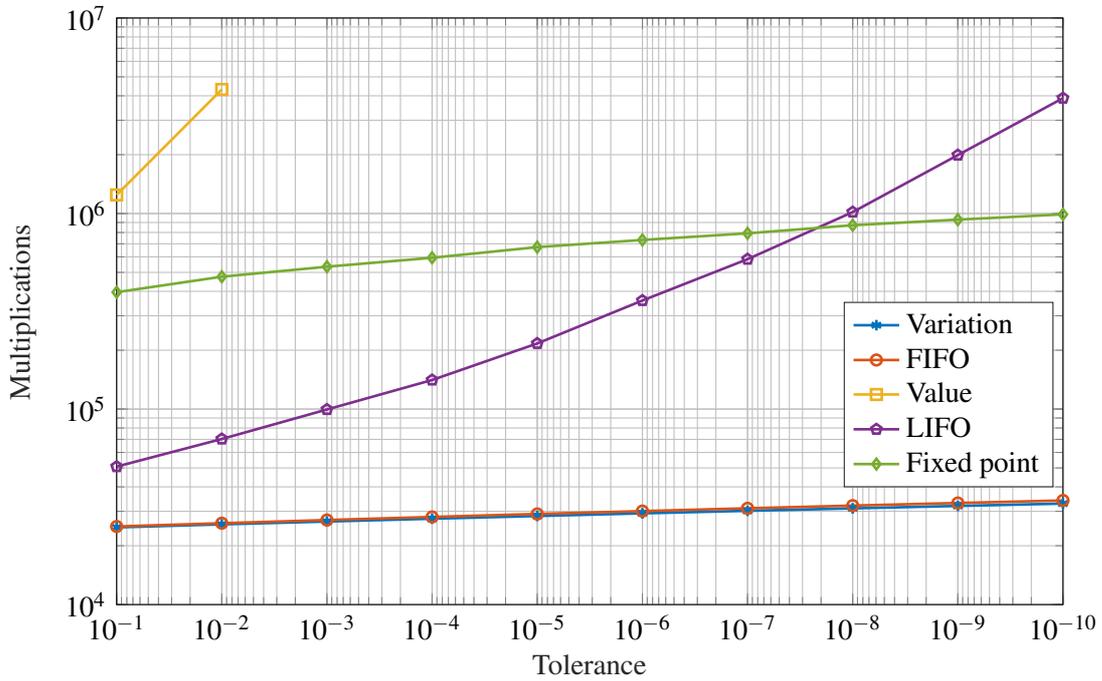
\captionof{figure}{Scalar multiplications for different tolerances on a graph generated with Barab\'asi-Albert model.}
\label{fig:barabasi_albert_graph_multiplications}

\begin{tikzpicture}

\begin{axis}[%
width=.8\textwidth,
height=.5\textwidth,
at={(1.011in,0.642in)},
scale only axis,
x dir=reverse,
xmode=log,
xmin=1e-10,
xmax=0.1,
xminorticks=true,
xlabel style={font=\color{white!15!black}},
xlabel={Tolerance},
ymode=log,
ymin=8890.4,
ymax=1000000,
yminorticks=true,
ylabel style={font=\color{white!15!black}},
ylabel={Multiplications},
axis background/.style={fill=white},
xmajorgrids,
xminorgrids,
ymajorgrids,
yminorgrids,
legend style={at={(0.99,0.2)}, anchor=south east, legend cell align=left, align=left, draw=white!15!black}
]
\addplot [color=mycolor1, line width=1.0pt, mark=asterisk, mark options={solid, mycolor1}]
  table[row sep=crcr]{%
0.1	8890.4\\
0.01	9707.6\\
0.001	10534.6\\
0.0001	11372.6\\
1e-05	12204\\
1e-06	13037.6\\
1e-07	13872.6\\
1e-08	14702\\
1e-09	15537\\
1e-10	16372.2\\
};
\addlegendentry{Variation}

\addplot [color=mycolor2, line width=1.0pt, mark=o, mark options={solid, mycolor2}]
  table[row sep=crcr]{%
0.1	8956.2\\
0.01	9957.6\\
0.001	10953.8\\
0.0001	11948.2\\
1e-05	12930.8\\
1e-06	13912.8\\
1e-07	14885\\
1e-08	15860.4\\
1e-09	16831.8\\
1e-10	17805.2\\
};
\addlegendentry{FIFO}

\addplot [color=mycolor3, line width=1.0pt, mark=square, mark options={solid, mycolor3}]
  table[row sep=crcr]{%
0.1	31863.4\\
0.01	58205.4\\
0.001	111309.6\\
0.0001	220091.4\\
1e-05	442670.4\\
1e-06	895020.4\\
1e-07	0\\
1e-08	0\\
1e-09	0\\
1e-10	0\\
};
\addlegendentry{Value}

\addplot [color=mycolor4, line width=1.0pt, mark=pentagon, mark options={solid, mycolor4}]
  table[row sep=crcr]{%
0.1	20940.6\\
0.01	29482.8\\
0.001	41511.2\\
0.0001	60431.6\\
1e-05	88596\\
1e-06	132722.6\\
1e-07	212574\\
1e-08	311660.4\\
1e-09	437125.2\\
1e-10	761744.2\\
};
\addlegendentry{LIFO}

\addplot [color=mycolor5, line width=1.0pt, mark=diamond, mark options={solid, mycolor5}]
  table[row sep=crcr]{%
0.1	80000\\
0.01	96000\\
0.001	108000\\
0.0001	120000\\
1e-05	136000\\
1e-06	148000\\
1e-07	160000\\
1e-08	176000\\
1e-09	188000\\
1e-10	200000\\
};
\addlegendentry{Fixed point}

\end{axis}
\end{tikzpicture}

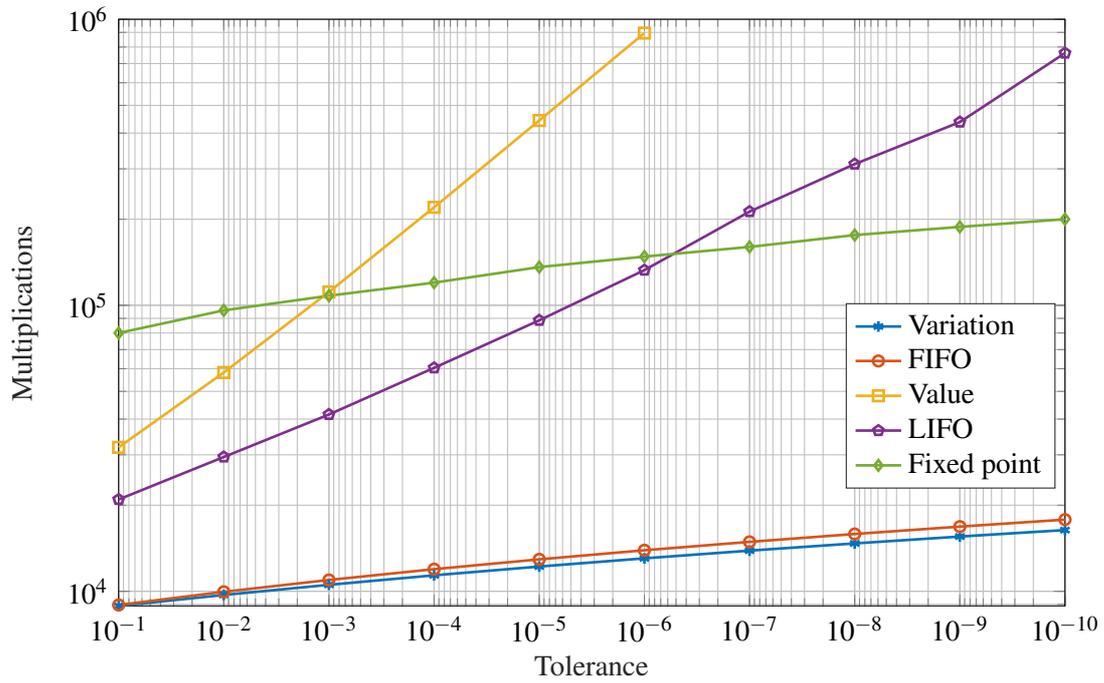
\captionof{figure}{Scalar multiplications for different tolerances on a graph generated with Newman-Watts-Strogatz model.}
\label{fig:newman_watts_strogatz_graph_multiplications}

\begin{tikzpicture}

\begin{axis}[%
width=.8\textwidth,
height=.5\textwidth,
at={(1.011in,0.642in)},
scale only axis,
x dir=reverse,
xmode=log,
xmin=1e-10,
xmax=0.1,
xminorticks=true,
xlabel style={font=\color{white!15!black}},
xlabel={Tolerance},
ymode=log,
ymin=10000,
ymax=10000000,
yminorticks=true,
ylabel style={font=\color{white!15!black}},
ylabel={Multiplications},
axis background/.style={fill=white},
xmajorgrids,
xminorgrids,
ymajorgrids,
yminorgrids,
legend style={at={(0.99,0.2)}, anchor=south east, legend cell align=left, align=left, draw=white!15!black}
]
\addplot [color=mycolor1, line width=1.0pt, mark=asterisk, mark options={solid, mycolor1}]
  table[row sep=crcr]{%
0.1	20777.4\\
0.01	21665.4\\
0.001	22535.4\\
0.0001	23428.2\\
1e-05	24309\\
1e-06	25193\\
1e-07	26106.6\\
1e-08	26975.8\\
1e-09	27870.6\\
1e-10	28750.8\\
};
\addlegendentry{Variation}

\addplot [color=mycolor2, line width=1.0pt, mark=o, mark options={solid, mycolor2}]
  table[row sep=crcr]{%
0.1	21006\\
0.01	22028.4\\
0.001	23052.8\\
0.0001	24063.6\\
1e-05	25099.4\\
1e-06	26124.4\\
1e-07	27148\\
1e-08	28157\\
1e-09	29174.4\\
1e-10	30221.8\\
};
\addlegendentry{FIFO}

\addplot [color=mycolor3, line width=1.0pt, mark=square, mark options={solid, mycolor3}]
  table[row sep=crcr]{%
0.1	883130.8\\
0.01	3037159.8\\
0.001	0\\
0.0001	0\\
1e-05	0\\
1e-06	0\\
1e-07	0\\
1e-08	0\\
1e-09	0\\
1e-10	0\\
};
\addlegendentry{Value}

\addplot [color=mycolor4, line width=1.0pt, mark=pentagon, mark options={solid, mycolor4}]
  table[row sep=crcr]{%
0.1	43825.6\\
0.01	60735.8\\
0.001	86885.8\\
0.0001	123018.4\\
1e-05	187654\\
1e-06	307452.8\\
1e-07	525227.4\\
1e-08	874317\\
1e-09	1754279.8\\
1e-10	3582942\\
};
\addlegendentry{LIFO}

\addplot [color=mycolor5, line width=1.0pt, mark=diamond, mark options={solid, mycolor5}]
  table[row sep=crcr]{%
0.1	317440\\
0.01	380928\\
0.001	428544\\
0.0001	476160\\
1e-05	539648\\
1e-06	587264\\
1e-07	634880\\
1e-08	698368\\
1e-09	745984\\
1e-10	793600\\
};
\addlegendentry{Fixed point}

\end{axis}
\end{tikzpicture}

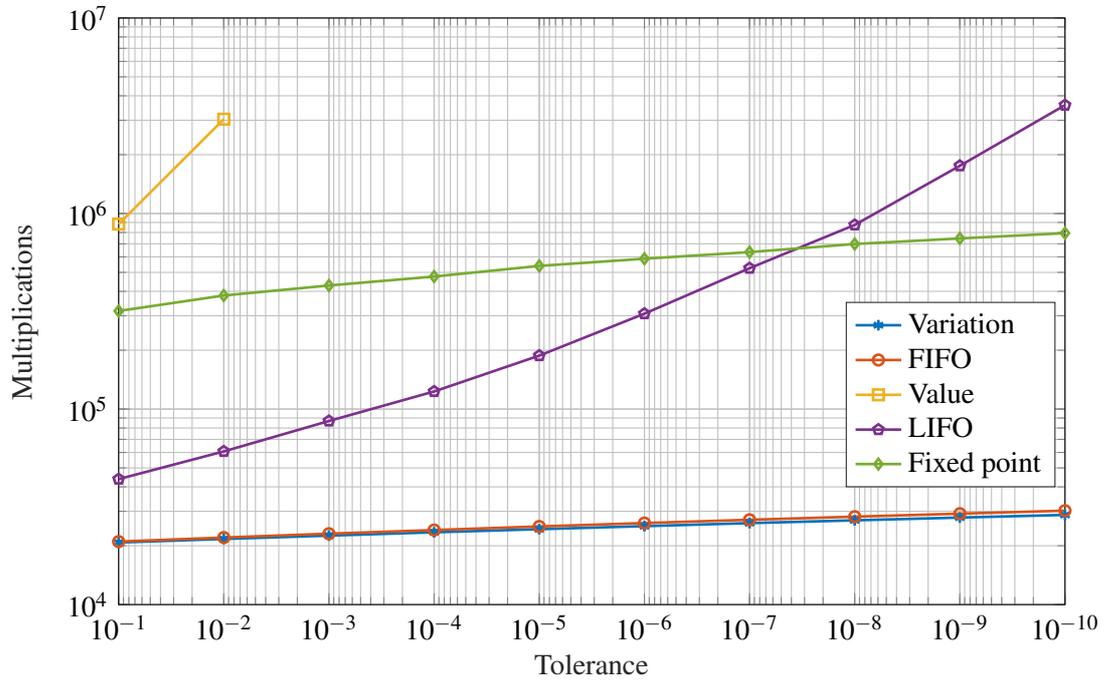
\captionof{figure}{Scalar multiplications for different tolerances on a graph generated with Holm and Kim algorithm.}
\label{fig:powerlaw_cluster_graph_multiplications}

\subsubsection{Test 2: number of operations}
We considered three instances of Problem~\eqref{eqn_prob_class_lin},
obtained from the three classes of random graphs considered in the
previous tests, with the same parameters and with 500 nodes. 
For each instance, we considered 10 logarithmically spaced values of
tolerance $\epsilon$ between $10^{-1}$ and $10^{-10}$.
We solved each problem with the following methods:
\begin{itemize}
\item the preconditioned fixed point iteration~\eqref{PROB2},
\item  Algorithm~\ref{alg:consensus} with FIFO, LIFO, node value and
  node variation policies.
\end{itemize}

The results are reported in Figures~\ref{fig:barabasi_albert_graph_multiplications},~\ref{fig:newman_watts_strogatz_graph_multiplications}
and~\ref{fig:powerlaw_cluster_graph_multiplications}.
These figures show that the number of product operations required with
node variation policy is much lower (of one order of magnitude) than those
required by the fixed point iteration~\eqref{PROB2_noprec}.
The iteration based on FIFO, even though slightly less performing than the
the node variation policy, also gives comparable results to it.
Observe that, even though the iteration based on node variation requires (slightly) less
scalar multiplications than the one based on FIFO, its solution times are worse
than those obtained with the FIFO policy, since the management of the priority
queue based on node variation is computationally more demanding than a 
First-In-First-Out data structure.
The iteration based on nodes value provides poor performances even with high
tolerances.
Also, the iteration based on LIFO gives poor computational results,
underperforming the fixed point iteration~\eqref{PROB2_noprec} for tolerances
smaller than $10^{-7}$, in Figures~\ref{fig:barabasi_albert_graph_multiplications}
and~\ref{fig:powerlaw_cluster_graph_multiplications}, and smaller than $10^{-6}$,
in Figure~\ref{fig:newman_watts_strogatz_graph_multiplications}.
Note that, in Figures~\ref{fig:barabasi_albert_graph_multiplications},~\ref{fig:newman_watts_strogatz_graph_multiplications}
and~\ref{fig:powerlaw_cluster_graph_multiplications}, below a certain value of
the tolerance, the numbers of scalar multiplications for the priority queue based
on node value are missing due to excessively high computational times.
However, the first collected data points are enough for drawing conclusions on
the performances of this policy.
\newline\newline\noindent
As a concluding remark, we observe that all the experiments confirm our previous claim about the relevance of the ordering in the priority queue. While convergence is guaranteed for all the orderings we tested, speed of convergence and number of scalar multiplications turn out to be rather different between them. 
In what follows we give a tentative explanation of such different performances.
The good performance of the node variation policy can be explained with the fact that such 
policy guarantees a quick reduction of the variables values. The LIFO and value orderings seem to update a small subset of variables before proceeding to update also the other variables. This is particularly evident in the case of the value policy, where  only variables with small values are initially updated. The FIFO ordering guarantees a more uniform propagation of the updates, thus avoiding stagnation into small portions of the feasible region.

\section*{Appendix: Proofs of the Main Results}\label{sec:proof}

\subsection{Proof of Proposition~\ref{Proposition:reformulation}}
\begin{proof}
Given $A \in \Real^{n \times n}$ let us define, for $i=1,\ldots,n$ the
sum of the elements of row $i$
\begin{equation}
\label{eqn_def_s}
s_i(A) := \sum_{j = 1}^n {[A]}_{ij}.
\end{equation}

\noindent
Note that, for  any $\ell \in \mathcal{L}$, matrix $P_\ell$ defined
in~\eqref{eq:P_def} is  positive
diagonal since, by assumption, all elements of $D_\ell$ are
less than $1$.
One can rewrite the inequality of Problem~\eqref{eqn_prob_class_lin} as
\begin{align*}
&\underline 0 \leq \underset{\ell \in \mathcal{L}}{\glb} \{ A_\ell x - (D_\ell - D_\ell + I)x+ b_\ell \} \\
\Leftrightarrow\ &
\underline 0 \leq \underset{\ell \in \mathcal{L}}{\glb} \{(A_\ell - D_\ell) x - (I - D_\ell)x + b_\ell \} \\
\Leftrightarrow\ &
\underline 0 \leq \underset{\ell \in \mathcal{L}}{\glb} \{(I - D_\ell)^{-1}(A_\ell - D_\ell) x - x + (I - D_\ell)^{-1}b_\ell \} \\
\Leftrightarrow\ &
x \leq \underset{\ell \in \mathcal{L}}{\glb} \{(I - D_\ell)^{-1}(A_\ell - D_\ell) x + (I - D_\ell)^{-1}b_\ell \} \\
\Leftrightarrow\ &
x \leq \underset{\ell \in \mathcal{L}}{\glb} \{{P_\ell}^{-1}(A_\ell - D_\ell) x + {P_\ell}^{-1}b_\ell \}
\end{align*}

Then, set $\hat A_\ell := {P_\ell}^{-1}(A_\ell - D_\ell)$ and
$\hat b_\ell := {P_\ell}^{-1} b_\ell$
and $\hat g(x)=\underset{\ell \in \mathcal{L}}{\bigwedge} \{\hat g_\ell(x)\} \wedge U$,
where, for $\ell \in \mathcal{L}$,
\begin{equation}\label{def:hat_g_ell}
\hat g_\ell(x) := \hat A_\ell x + \hat b_\ell.
\end{equation}
Note that $\hat g$ is monotonic (since all entries of $\hat A_\ell$
are nonnegative) and for $i=1,\ldots,n$, $\left[\hat g \right]_i$ is independent on
$x_i$ (since the diagonal entries of $\hat A_\ell$ are null).
Note also that $\hat b_\ell$ is nonnegative.
Hence, Problem~\eqref{eqn_prob_class_lin} takes on the form of
Problem~\eqref{eqn_prob_class}.
\end{proof}

\subsection{Proof of Proposition~\ref{thm_main}}
Given $P_\ell$ as in~\eqref{eq:P_def} for $i \in \mathcal{V}$
and $\ell \in \mathcal{L}$ we have that
\begin{align*}
s_i(\hat A_\ell) \leq \frac{\gamma - \left[D_\ell\right]_{ii}}{\left[P_\ell\right]_{ii}},
\end{align*}
where $s_i$ is defined in~\eqref{eqn_def_s} and $\hat A_\ell$ is defined as in~\eqref{def:Ab_hat}.
Let us note that
\begin{equation}\label{S_def}
\max_{\substack{\ell \in \mathcal{L} \\ i \in \mathcal{V}}}
\left\{ s_i(\hat A_\ell) \right\} \leq
\max_{\substack{\ell \in \mathcal{L} \\ i \in \mathcal{V}}}
\left\{ \frac{\gamma - \left[D_\ell\right]_{ii}}{\left[P_\ell\right]_{ii}} \right\}
= \max_{\substack{\ell \in \mathcal{L} \\ i \in \mathcal{V}}}
\left\{\frac{\gamma - \left[D_\ell\right]_{ii}}{1 - \left[D_\ell\right]_{ii}}\right\}
= \hat\gamma,
\end{equation}
where $\hat \gamma$ is defined as in~\eqref{def:hat_gamma}.
Note that the term on the left-hand side is the maximum of $s_i(A)$
for all possible $i \in \mathcal{V}$ and for all possible matrices
$A \in \Real^{n \times n}$ which can be obtained by all possible
combinations of the rows of matrices $A_\ell$, with $\ell \in \mathcal{L}$.
We prove that $\hat\gamma \leq \gamma$, under the given assuptions.
Indeed, it is immediate to see that function
\begin{equation}\label{eq:Sd}
S(d) := \frac{\gamma - d}{1 - d}
\end{equation}
is monotone decreasing for any $d \in [0, \gamma]$.
We remark that, for any
$\ell \in \mathcal{L}$, $\left\| \hat A_\ell \right\|_\infty \leq \hat \gamma$.
Now, for any $x \in \Real^n$, let us define $\hat g_U(x) := U$, while
for any $\ell \in \mathcal{L}$, $\hat g_\ell(x)$ is defined as in~\eqref{def:hat_g_ell}.
It is immediate to see that $\forall x, y \in \Real^n$,
$\left\| \hat g_i(x) - \hat g_i(y) \right\|_\infty \leq \hat \gamma \| x- y \|_\infty$,
for any $i \in \mathcal{L} \cup \{U\}$.
Then, by Lemma~\ref{lemma:lipschitz} we have that, for
$\hat g(x) = \displaystyle\bigwedge_{k \in \mathcal{L} \cup \{U\}} \hat g_k(x)$,
it holds that $\forall x, y \in \Real^n$,
$\left\| \hat g(x) - \hat g(y) \right\|_\infty \leq \hat \gamma \| x - y \|_\infty$,
that is, $\hat g$ is a contraction.

\subsection{Proof of Proposition~\ref{thm:convergence_speed}}
We first remark that $x_0 \geq x^+$ implies $x_k \geq x^+$
and $\bar g(x_k) \geq x^+$ for any $k$, where $\bar g$ is defined
as in~\eqref{eq:fixed_point_ref2}.
Then, we provide a lower bound for $\left\| \bar g(x_k) - \bar g(x^+) \right\|_\infty$.
Let $\bar A \in \Real^{n \times n}_+$ and $\bar b \in \Real^n_+$
be such that $\bar A x_k + \bar b = g(x_k)$.
Note that $\bar A$ is obtained by a combination of the rows of matrices
$A_\ell$, with $\ell \in \mathcal{L}$.
In other words, for each $i \in \{1, \ldots, n\}$,
$\left[ \bar A \right]_{i*} = \left[ A_{\ell_i} \right]_{i*}$ for some
$\ell_i \in \mathcal{L}$.
Then, in view of $x_k \geq x^+$, $x^+ \leq \bar A x^+ + \bar b$
and $\bar A \geq 0$,
\begin{align*}
\left\| \bar g(x_k) - \bar g(x^+) \right\|_\infty
=& \left\| \bar A x_k + \bar b - x^+ \right\|_\infty \geq
\left\| \bar A x_k + \bar b - (\bar A x^+ + \bar b) \right\|_\infty
= \left\| \bar A(x_k - x^+) \right\|_\infty \geq \\
\geq& \left\| \diag(\bar A) (x_k - x^+) \right\|_\infty
\geq (1 - \Delta)\gamma \left\| x_k - x^+ \right\|_\infty,
\end{align*}
where the last inequality follows from \eqref{dominant_diagonal}.
Then, the result follows by observing that
\begin{gather*}
\frac{\Delta \gamma}{1 - (1 - \Delta)\gamma} < (1 - \Delta)\gamma\
\Leftrightarrow\
\Delta \gamma < (1 - \Delta)\gamma - (1 - \Delta)^2\gamma^2\
\Leftrightarrow \\
\Leftrightarrow\
\gamma^2\Delta^2 + 2(1 - \gamma)\gamma\Delta - (1 - \gamma)\gamma < 0
\Leftrightarrow\
\Delta \in \left[\left. 0, \frac{\sqrt{1 - \gamma} - (1 - \gamma)}{\gamma} \right.\right).
\end{gather*}

\bibliographystyle{abbrv}
\bibliography{biblio}

\begin{thebibliography}{10}

\bibitem{4554208}
A.~Al-Tamimi, F.~L. Lewis, and M.~Abu-Khalaf.
\newblock Discrete-time nonlinear hjb solution using approximate dynamic
  programming: Convergence proof.
\newblock {\em IEEE Transactions on Systems, Man, and Cybernetics, Part B
  (Cybernetics)}, 38(4):943--949, Aug 2008.

\bibitem{BARABASI1999}
A.-L. Barab{\'a}si and R.~Albert.
\newblock Emergence of scaling in random networks.
\newblock {\em Science}, 286:509--512, 1999.

\bibitem{bardi2008optimal}
M.~Bardi and I.~Capuzzo-Dolcetta.
\newblock {\em Optimal control and viscosity solutions of
  Hamilton-Jacobi-Bellman equations}.
\newblock Springer Science \& Business Media, 2008.

\bibitem{Belotti2009}
P.~Belotti, J.~Lee, L.~Liberti, F.~Margot, and A.~W{\"a}chter.
\newblock Branching and bounds tightening techniques for non-convex minlp.
\newblock {\em Optimization Methods and Software}, 24(4-5):597--634, 2009.

\bibitem{Cabassi2018}
F.~Cabassi, L.~Consolini, and M.~Locatelli.
\newblock Time-optimal velocity planning by a bound-tightening technique.
\newblock {\em Computational Optimization and Applications}, 70(1):61--90, May
  2018.

\bibitem{DBLP:journals/corr/abs-1802-03294}
L.~Consolini, M.~Locatelli, A.~Minari, A.~Nagy, and I.~Vajk.
\newblock Optimal time-complexity speed planning for robot manipulators.
\newblock {\em CoRR}, abs/1802.03294, 2018.

\bibitem{Minari16}
L.~Consolini, M.~Locatelli, A.~Minari, and A.~Piazzi.
\newblock A linear-time algorithm for minimum-time velocity planning of
  autonomous vehicles.
\newblock In {\em Proceedings of the 24th Mediterranean Conference on Control
  and Automation (MED), {IEEE}}, 2016.

\bibitem{MinSCL17}
L.~Consolini, M.~Locatelli, A.~Minari, and A.~Piazzi.
\newblock An optimal complexity algorithm for minimum-time velocity planning.
\newblock {\em Systems and Control Letters}, in press, 2017.

\bibitem{davey2002introduction}
B.~Davey and H.~Priestley.
\newblock {\em Introduction to Lattices and Order}.
\newblock Cambridge University Press, 2002.

\bibitem{NetworkX}
A.~A. Hagberg, D.~A. Schult, and P.~J. Swart.
\newblock Exploring network structure, dynamics, and function using networkx.
\newblock In G.~Varoquaux, T.~Vaught, and J.~Millman, editors, {\em Proceedings
  of the 7th Python in Science Conference (SciPy2008)}, pages 11--15, Aug 2008.

\bibitem{heinonen2005lectures}
J.~Heinonen.
\newblock {\em Lectures on Lipschitz Analysis}.
\newblock Bericht (Jyv{\"a}skyl{\"a}n yliopisto. Matematiikan ja tilastotieteen
  laitos). University of Jyv{\"a}skyl{\"a}, 2005.

\bibitem{HOLME2002}
P.~Holme and B.~J. Kim.
\newblock Growing scale-free networks with tunable clustering.
\newblock {\em Physical Review E}, 65(026107):1--4, Jan 2002.

\bibitem{6328288}
D.~Liu and Q.~Wei.
\newblock Finite-approximation-error-based optimal control approach for
  discrete-time nonlinear systems.
\newblock {\em IEEE Transactions on Cybernetics}, 43(2):779--789, April 2013.

\bibitem{NEWMAN1999}
M.~E.~J. Newman and D.~J. Watts.
\newblock Renormalization group analysis of the small-world network model.
\newblock {\em Physics Letters A}, 263(4-6):341--346, 1999.

\bibitem{Wang01062000}
S.~Wang, F.~Gao, and K.~L. Teo.
\newblock An upwind finite-difference method for the approximation of viscosity
  solutions to hamilton-jacobi-bellman equations.
\newblock {\em IMA Journal of Mathematical Control and Information},
  17(2):167--178, 2000.

\end{thebibliography}

\end{document}